\newcommand{\red}[1]{\textcolor{red}{#1}}
\numberwithin{equation}{section}
\numberwithin{figure}{section}
\theoremstyle{plain}
\newtheorem{theorem}{Theorem}[section]
\newtheorem{proposition}[theorem]{Proposition}
\newtheorem{corollary}[theorem]{Corollary}
\theoremstyle{definition}
\newtheorem{remark}[theorem]{Remark}
\newtheorem{example}[theorem]{Example}
\newtheorem{definition}[theorem]{Definition}
\numberwithin{equation}{section}
\title[Mean dimension and metric mean dimension]{Properties of mean dimension and metric mean dimension coming from the topological entropy}
\author{Jeovanny de Jesus Muentes  Acevedo \quad and \quad Carlos Rafael Payares Guevara}
\address{Jeovanny de J. Muentes Acevedo, Facultad de Ciencias B\'asicas,  Universidad Tecnol\'ogica de  Bolivar, Cartagena de Indias - Colombia}
\email{jmuentes@utb.edu.co}
\address{Carlos R. Payares Guevara, Facultad de Ciencias B\'asicas,  Universidad Tecnol\'ogica de  Bolivar, Cartagena de Indias - Colombia}
\email{cpayares@utb.edu.co}
\begin{document}

\begin{abstract}
In   the late 1990's, M. Gromov   introduced the notion of mean dimension for a continuous map, which is, as well as the topological entropy, an invariant under topological conjugacy.  The concept of metric mean dimension for a dynamical system was   introduced by Lindenstrauss and Weiss in 2000.   In  this paper we    will verify which properties coming from the topological entropy  map are valid for both mean dimension and metric mean dimension. In particular, we will prove that the metric mean dimension map is not continuous anywhere on the set consisting of continuous maps on both the Cantor set,   the interval or  the circle.  Finally we prove that the metric mean dimension on the set consisting of continuous map on the interval and on the circle is not lower semi-continuous.   \end{abstract}

\keywords{mean dimension, metric mean dimension, topological entropy, non-autonomous dynamical systems, box dimension}

\subjclass[2010]{	54H20, 	37E05, 	37A35}

\date{\today}
\maketitle


\section{Introduction}

Let $X$ be a compact metric space with metric $d$. The notion of mean dimension for a topological dynamical system $(X,\phi)$, which will be denoted by $\text{mdim}(X,\phi)$, was introduced by M. Gromov in \cite{Gromov}. It is another  invariant under topological conjugacy.  Applications of the mean dimension to the embedding of dynamical systems problem  can be found in \cite{GTM}, \cite{Gutman}, \cite{lind2}, \cite{lind}, \cite{lind3}, \cite{lind4}  (these applications are summarized in the introduction of \cite{Fagner}).  

\medskip

Lindestrauss and Weiss in  \cite{lind},  introduced the notion of  metric mean dimension for any continuous map $\phi$ on $X$. It notion depends on the metric $d$ on $X$ (consequently it is not invariant under topological conjugacy) and it is zero for any map with finite topological entropy. Some well-known properties of the topological entropy are valid for both the mean dimension and the metric mean dimension. 
 Then it is a very interesting work to study which other  properties of the entropy are maintained  by the mean dimension and the metric mean dimension.  

\medskip

Let $\text{dim}(X)$ be the topological dimension of $X$, $\text{dim}_{H}(X,d)$ is the Hausdorff dimension of $X$ with respect to $d$, $\underline{\text{dim}_{B}}(X,d)$ is the lower box dimension of $X$ with respect to $d$ and $\overline{\text{dim}_{B}}(X,d)$ is the upper box dimension of $X$ with respect to $d$. We have $$\text{dim}(X)\leq \text{dim}_{H}(X,d)\leq   \underline{\text{dim}_{B}}(X,d)\leq  \overline{\text{dim}_{B}}(X,d),$$  (see \cite{Kawabata}, Section II, A).  In \cite{lind}, Proposition 3.1, is proved that  
$\text{mdim}(X^{\mathbb{Z}},\sigma)\leq \text{dim}(X)$, where $\sigma$ is the shift map on $X^{\mathbb{Z}}$.  In Section 2 we will present the definitions of  the lower  metric mean dimension  of $(X,\phi,d)$ (denoted by $ 
 \underline{\text{mdim}_M}(X,\phi,d)$)  and the upper metric mean dimension  of $(X,\phi,d)$  (denoted by $\overline{\text{mdim}_M}(X,\phi,d)$).  
In \cite{Fagner}, Theorem  4.4 and \cite{VV}, Theorem 5,  is proved that  for $\mathbb{K}=\mathbb{Z}$ or $\mathbb{N},$ we have    $  \overline{\text{mdim}_{M}}(X^{\mathbb{K}}, \sigma, \tilde{d}) =  \overline{\text{dim}_{B}} (X,d) $ and $  \underline{\text{mdim}_{M}}(X^{\mathbb{K}}, \sigma, \tilde{d})= \underline{\text{dim}_{B}} (X,d). $ This fact implies that  $ \overline{\text{mdim}_{M}}(X,\phi,d) \leq \overline{\text{dim}_{B}} (X,d) $  and $  \underline{\text{mdim}_{M}}(X,\phi,d) \leq \underline{\text{dim}_{B}} (X,d) $  for any continuous map  $\phi$ on $X$ (see \cite{Fagner}, Proposition 4.5 and \cite{VV}, Remark 4). Consequently, if $ \overline{\text{dim}_{B}} (X,d) $ is finite, then $\text{mdim}_{M}:C^{0}(X)\rightarrow \mathbb{R}$ is bounded. 

\medskip

 One of the most studied problem on the topological entropy is its continuity (see \cite{block}, \cite{Sheldon}, \cite{Yano}).     Note that if $\phi $ is any differentiable map, then   $h_{top}(\phi )<\infty$  and therefore $\phi$ has zero metric mean dimension (see Remark \ref{ueurur}). Then,  $\text{mdim}_{M}:C^{r}(X)\rightarrow \mathbb{R}$, $\phi\mapsto \text{mdim}_{M}(X,\phi,d)$,   is obviously continuous for $r\geq1$. Hence,  it remains to analyze the continuity of $\text{mdim}_{M}:C^{r}(X)\rightarrow \mathbb{R}$ only for $r = 0$. Now,     Yano in \cite{Yano} proved   that the topological entropy map is continuous on any continuous map, defined on a manifold, with infinite topological entropy. On the interval there exists continuous maps  with positive metric mean dimension   (see Example \ref{exfagner}).   We could expect that the metric mean dimension map to be continuous on maps with  maximum  metric mean dimension   (that is, maps on the interval with   metric mean dimension equal to 1). 
However, Example \ref{kegdjfjfn} proves the opposite. On the other hand, {Block, in \cite{block}, proved that $h_{top}$ is not continuous anywhere on the set consisting of continuous map on the Cantor set. Furthermore, he showed that if $X$ is the interval  or the circle, the topological entropy map is not continuous on continuous maps on $X$ with finite topological entropy. 
In Section 4 we will prove that    $\text{mdim}_{M}:C^{0}(X)\rightarrow \mathbb{R}$ is not continuous anywhere when  $X$ is the Cantor set, the interval or the circle. A corollary of the proof of  this  fact is that the set $\{\phi\in C^{0}([0,1]):  \text{mdim}_{M}([0,1],\phi,|\cdot |)=1\}$ is   dense  in $C^{0}([0,1])$ (see Corollary \ref{hdhdhfhf}). }
 
\medskip

 The map  $h_{top}: C^{0}([0,1])\rightarrow \mathbb{R}\cup \{\infty\}$ is lower semi-continuous (see \cite{Misiurewicz}).   However $h_{top}:\mathcal{C}([0,1])\rightarrow \mathbb{R}\cup\{\infty\}$ is not lower semi-continuous, where $\mathcal{C}([0,1])$ is the set consisting of non-autonomous dynamical systems on $[0,1]$ (see \cite{K-S}).    $h_{top}:\mathcal{C}([0,1])\rightarrow \mathbb{R}\cup\{\infty\}$ is  lower semi-continuous on any constant sequence $(\phi, \phi,\dots)\in \mathcal{C}(X)$. We will prove that the metric mean dimension on both 
 $ {C}([0,1])$ and  $\mathcal{C}([0,1])$ is not lower semi continuous (see Section 5). 
 
\medskip

Next, we present the structure of the paper.  In the next section we will present the definitions of the mean dimension and the metric mean dimension for continuous maps and non-autonomous dynamical systems. In Section 3 we will recall some well-known properties of the topological entropy map and we will discuss    what of these properties    are also valid for the    the mean dimension map and the metric mean dimension map. We will consider the cases of single maps and non-autonomous dynamical systems. 
Examples  \ref{ghndand} (which proves that the metric mean dimension depends on the metric on $X$) and \ref{mnbad} (which proves that the metric mean dimension of the direct product of two maps can be strictly smaller
that the sum of the metric mean dimension of each map) are provided by the authors.  The continuity and the semi continuity of $\text{mdim}_{M}$ will be discussed in Sections 4 and 5.

\section{Mean dimension and metric mean dimension}

Let $X$ be a compact metric space  endowed with a metric $d$. In this section we will define the mean dimension and the metric mean dimension for non-autonomous dynamical systems (see \cite{Fagner}) and for single continuous maps (see \cite{lind}), since  some properties will proved in the next section hold for both non-autonomous and  autonomous systems.  Suppose that  $\textit{\textbf{f}}=(f_n)_{n\in \mathbb N}$  is a  non-autonomous
dynamical system, where $f_n:X\to X$
is a continuous map for all $n\geq 1$. We write $(X,\textit{\textbf{f}},d)$ to denote a non-autonomous dynamical system \textit{\textbf{f}} on $X$. For $n,k\in \mathbb N$ define
$$
f_n^{(0)}:=I_X:= \text{the identity on }X\quad\quad \text{and} \quad \quad  f_n^{(k)}(x):=f_{n+k-1}\circ \dots \circ f_n(x)\quad\text{for }k\geq 1.
$$

 Set
\[ \mathcal{C}(X)=\{ (f_{n})_{n\in\mathbb{N}}: f_{n} \text{ is  continuous on }X\}\quad \text{and}\quad  {C}^{0}(X)=\{\phi:\phi \text{ is continuous  on }X\}.\]
Single continuous map are classified by topological conjugacy. Non-autonomous dynamical systems are classified by uniform equiconjugacy: 

\begin{definition}We say that  the systems  $\textit{\textbf{f}}=(f_{n})_{n\in\mathbb{N}}\in \mathcal{C}(X)$   and $\textit{\textbf{g}}=(g_{n})_{n\in\mathbb{N}}\in \mathcal{C}(Y)$, where $Y$ is a compact metric space, 
are \textit{uniformly equiconjugate} if there exists a equicontinuous sequence of  homeomorphisms $h_n: X\to Y$ (which will be called a \textit{uniform equiconjugacy})  so that
$h_{n+1}\circ f_n=g_n\circ h_n$, for all $n\in \mathbb N$. 
In the case where $h_n=h$, for all $n\in\mathbb N$, we say that
$ \textit{\textbf{f}}$ and $\textit{\textbf{g}}$ are \textit{uniformly conjugate}.\end{definition}

Given $\alpha $ an open cover of $X$ define
\[
\alpha_{0}^{n-1}=\alpha \vee f^{-1}_1(\alpha)\vee (f_1^{(2)})^{-1}(\alpha)\vee \dots \vee (f_1^{(n-1)})^{-1}(\alpha)
\]
and set 
\[\text{ord}(\alpha)=\sup_{x\in X}\sum_{U\in \alpha }1_{U}(x)-1\quad \quad \text{ and }\quad \quad \mathcal D(\alpha )
                    =\min_{\beta\prec\alpha}\text{ord}(\beta),\]
                   where $1_{U}$ is the indicator function and  $\beta \prec \alpha$ means that $\beta$ is a partition of $X$ finner than $\alpha$.
\begin{definition} The \textit{mean dimension} of $\textit{\textbf{f}}\in\mathcal{C}(X)$ is defined to be
\begin{equation}\label{def:meandimension}
\text{mdim}(X,\textit{\textbf{f}}\,)=\sup_{\alpha}\lim_{n\to\infty}\frac{\mathcal D(\alpha_{0}^{n-1})}{n}.
\end{equation}
The mean dimension of a continuous map $\phi:X\rightarrow X$ will be denoted by
$ 
\text{mdim}(X,\phi)$. \end{definition}

 \begin{remark}
   In \cite{Fagner}, Remark 2.2, we provide a list of properties of the mean dimension. 
 \end{remark}

    For any non-negative integer
$n$ we define $d_n:X\times X\to [0,\infty)$ by
$$
d_n(x,y)=\max\{d(x,y),d(f_1(x),f_1(y)),\dots,d(f_1^{(n-1)}(x),f_1^{(n-1)}(y))\}.
$$   Fix $\varepsilon>0$. We say that $A\subset X$ is a $(n,\textit{\textbf{f}},\varepsilon)$-separated set
if $d_n(x,y)>\varepsilon$, for any two  distinct points  $x,y\in A$. We denote by $\text{sep}(n,\textit{\textbf{f}},\varepsilon)$ the maximal cardinality of a $(n,\textit{\textbf{f}},\varepsilon)$-separated
subset of $X$. Furthermore, let $\text{cov}(n,\textit{\textbf{f}},\varepsilon)$ denotes the minimum number of $\varepsilon$-balls
in the $d_n$-metric to cover $X$. We say that $E\subset X$ is a $(n,\textit{\textbf{f}},\varepsilon)$-spanning set if
for any $x\in X$ there exists $y\in E$ so that $d_n(x,y)<\varepsilon$. Let $\text{span}(n,\textit{\textbf{f}},\varepsilon)$ be the minimum cardinality
of $(n,\textit{\textbf{f}},\varepsilon)$-spanning subset of $X$. Given an open cover $\alpha$, we say that $\alpha$ is a
$(n,\textit{\textbf{f}},\varepsilon)$-cover if the $d_n$-diameter of any element of $\alpha$ is less than $\varepsilon$. Let
$\text{cov}(n,\textit{\textbf{f}},\varepsilon)$ be the minimum cardinality of a $(n,\textit{\textbf{f}},\varepsilon)$-cover. By the compacity of $X$, $\text{sep}(n,\textit{\textbf{f}},\varepsilon)$, $\text{span}(n,\textit{\textbf{f}},\varepsilon)$  and $\text{cov}(n,\textit{\textbf{f}},\varepsilon)$ are finite real numbers.

 \begin{definition}
  The \emph{topological entropy} of $(X,\textit{\textbf{f}},d)$   is defined by      
  \begin{equation}\label{topent}h_{top}(\textit{\textbf{f}}\,)=\lim _{\varepsilon\to0} \text{sep}(\textit{\textbf{f}},\varepsilon)=\lim_{\varepsilon\to0}  \text{span}(\textit{\textbf{f}},\varepsilon)=\lim_{\varepsilon\to0}\text{cov}(\textit{\textbf{f}},\varepsilon),
\end{equation}
respectively, where $\text{sep}(\textit{\textbf{f}},\varepsilon)=\underset{n\to\infty}\lim \frac{1}{n}\log \text{sep}(n,\textit{\textbf{f}},\varepsilon)$, $\text{span}(\textit{\textbf{f}},\varepsilon)=\underset{n\to\infty}\lim \frac{1}{n}\log \text{span}(n,\textit{\textbf{f}},\varepsilon)$ and $\text{cov}(\textit{\textbf{f}},\varepsilon)=\underset{n\to\infty}\lim \frac{1}{n}\log \text{cov}(n,\textit{\textbf{f}},\varepsilon)$. \end{definition}

For any continuous map $\phi:X\rightarrow X$, we denote the topological entropy of $\phi$ by $h_{top}(\phi)$. 

 \begin{definition}
  We define the \emph{lower  metric mean dimension} of $(X,\textit{\textbf{f}},d)$  and the \emph{upper metric mean dimension} of $(X,\textit{\textbf{f}},d)$ by
  \begin{equation}\label{metric-mean}
 \underline{\text{mdim}_M}(X,\textit{\textbf{f}},d)=\liminf_{\varepsilon\to0} \frac{\text{sep}(\textit{\textbf{f}},\varepsilon)}{|\log \varepsilon|}\quad \text{ and }\quad\overline{\text{mdim}_M}(X,\textit{\textbf{f}},d)=\limsup_{\varepsilon\to0} \frac{\text{sep}(\textit{\textbf{f}},\varepsilon)}{|\log \varepsilon|},
\end{equation}
respectively. \end{definition}

It is not difficult to see that
$$
\underline{\text{mdim}_M}(X,\textit{\textbf{f}},d)=\liminf_{\varepsilon\to0} \frac{\text{span}(\textit{\textbf{f}},\varepsilon)}{|\log \varepsilon|}=\liminf_{\varepsilon\to0} \frac{\text{cov}(\textit{\textbf{f}},\varepsilon)}{|\log \varepsilon|}.
$$
The same holds for the upper metric mean dimension.

\medskip

For any continuous map $\phi:X\rightarrow X$, we denote the  lower  metric mean dimension and the  upper metric mean dimension of $\phi$ by  $  \underline{\text{mdim}_M}(X,\phi,d)$ and $ \overline{\text{mdim}_M}(X,\phi,d)$, respectively.

\medskip

The inequalities 
\[\text{mdim}(X,\textit{\textbf{f}}\,)\leq  \underline{\text{mdim}_M}(X,\textit{\textbf{f}},d)\leq \overline{\text{mdim}_M}(X,\textit{\textbf{f}},d) < h_{top}(\textit{\textbf{f}}\,)\] always hold. 
The proof for the first inequality for single map can be seen in \cite{lind}, Theorem 4.2, and for non-autonomous dynamical systems in \cite{Fagner}, Theorem 3.7. 

\begin{remark}\label{ueurur}
  Following the definition of topological entropy for non-autonomous dynamical systems introduced in \cite{K-S} one can see that
if the topological entropy of the system $(X,\textit{\textbf{f}},d)$ is finite then its metric mean dimension is zero.
\end{remark}

\begin{remark}
Throughout the paper, we will omit the underline and the overline  on the notations of the mean dimension and on the metric mean dimension when the result be valid for both cases. 
\end{remark}

\section{Properties coming from the topological entropy map}

In this section we will verify  what properties  coming from the topological entropy map (which are well-known)     are also valid for the map  $\Phi $ as being  the topological entropy map, the mean dimension map and the metric mean dimension map, for both, autonomous and non-autonomous dynamical systems.   These properties are: 
\begin{enumerate}[i)]
\item Invariance under conjugacy.
\item $\Phi(\phi^{p})=p\Phi (\phi)$, where $\phi$ is a continuous map or a sequence of continuous map on $X$.
\item Boundedness.
\item Monoticity for the case of non-autonomous dynamical systems. 
\item $\Phi(\phi\times \psi)= \Phi (\phi)+\Phi(\psi)$ for any continuous maps  $\phi$ and $\psi$ on $X$. 
\end{enumerate}
Throughout this section,    $\textit{\textbf{f}}=(f_{n})_{n\in\mathbb{N}}$ will be denotes a non-autonomous dynamical system in $ \mathcal{C}(X)$ and      $\phi: X\rightarrow X$ a continuous map. 

\medskip

Firstly, we will discuss about i).  One of the most important properties of the topological entropy is that it is invariant by topological conjugacy. It is also an   invariant under uniform equiconjugacy between  non-autonomous dynamical systems (see \cite{K-S} and \cite{JeoCTE}).    Mean dimension (for both, autonomous and non-autono\-mous dynamical systems) is invariant under topological conjugacy (see   \cite{Fagner}, Theorem 5.1).  In the next example we prove that the metric mean dimension for single continuous maps  depends on the metric $d$ on $X$. Consequently, it is not an invariant under topological  conjugacy between continuous maps and therefore it is not an invariant under uniform equiconjugacy between non-autonomous dynamical systems. 

\medskip

From now on, we will consider the metric $\tilde{d}$ on $X^{\mathbb{K}}$ defined by \begin{equation}\label{mnvc}\tilde{d}(\bar{x},\bar{y})= \sum_{i\in\mathbb{K}}\frac{1}{2^{|i|}}d(x_{i},y_{i}) \quad\text{ for }\bar{x}=(x_{i})_{i\in\mathbb{K}},\,  \bar{y}=(y_{i})_{i\in\mathbb{K}} \in X^{\mathbb{K}}.
\end{equation}

\begin{example}\label{ghndand}    Let $  \overline{\text{dim}_{B}} (X,d)$ and $  \underline{\text{dim}_{B}} (X,d)$ be respectively the upper and lower  box dimension  of $X$ with respect to $d$. In \cite{Fagner},  Theorem  4.4 and \cite{VV}, Theorem 5,  is proved that  for $\mathbb{K}=\mathbb{Z}$ or $\mathbb{N},$ we  have    \begin{equation}\label{dhhdhdf} \overline{\text{mdim}_{M}}(X^{\mathbb{K}}, \sigma, \tilde{d}) =  \overline{\text{dim}_{B}} (X,d) \quad \quad \text{and}\quad \quad \underline{\text{mdim}_{M}}(X^{\mathbb{K}}, \sigma, \tilde{d})= \underline{\text{dim}_{B}} (X,d). \end{equation}
Consequently, if $X_{1}=\mathbb{S}^{1}$ endowed with the metric $d_{1}$ inherited from $\mathbb{R}^{2}$, we have $$  \underline{\text{mdim}_{M}}(X_{1}^{\mathbb{K}}, \sigma, \tilde{d}_{1})= \overline{\text{mdim}_{M}}(X_{1}^{\mathbb{K}}, \sigma, \tilde{d}_{1})=1. $$
Let $X_{2}$ be the Koch curve endowed with the metric  $d_{2}$ inherited from the plane $\mathbb{R}^{2}$. Therefore,  ${\text{dim}_{B}} (X_{2},d_{2})=\frac{\log 4}{\log 3}$ and hence  
\begin{equation*} \overline{\text{mdim}_{M}}(X_{2}^{\mathbb{K}}, \sigma, \tilde{d}_{2}) = \underline{\text{mdim}_{M}}(X_{2}^{\mathbb{K}}, \sigma, \tilde{d}_{2})= {\text{dim}_{B}} (X_{2},d_{2})=\frac{\log 4}{\log 3}. \end{equation*}
Since $X_{1}$ and $X_{2}$ are homeomorphic,  $(X_{1}^{\mathbb{Z}},\sigma)$  and $(X_{2}^{\mathbb{Z}},\sigma)$ are topologically conjugate, however their  metric mean dimensions are different.  
\end{example}

Set $ \mathcal{B}=\{\rho: \rho \text{ is a metric on }X\text{ equivalent to }d \}$  and take  
\[  \underline{\text{mdim}_M}(X, \textit{\textbf{f}}\,)=\inf_{\rho\in \mathcal{B}} \underline{\text{mdim}_M}(X,\textit{\textbf{f}},\rho)\quad \text{and}\quad  \overline{\text{mdim}_M}(X, \textit{\textbf{f}}\,)=\inf_{\rho\in \mathcal{B}} \overline{\text{mdim}_M}(X,\textit{\textbf{f}},\rho).\]
   
 Both  $ \underline{\text{mdim}_M}(X, \phi)$ and $ \underline{\text{mdim}_M}(X, \phi)$ are invariant under topological conjugacy for single continuous maps. For non-autonomous dynamical systems we have (see \cite{Fagner}, Theorem 5.1):

\begin{theorem}\label{edee344}  
  If $ \textit{\textbf{f}} $ and $ \textit{\textbf{g}} $ are uniformly equiconjugate by a sequence of homeomorphisms $(h_n)_{n\in\mathbb N}$ such that  $\inf_n \{d(h^{-1}_n(y_1),h_n^{-1}(y_2)), d(h_n(x_1),h_n(x_2))\}>0$  for any $x_1,x_2,y_{1},y_{2}\in X$, then
\begin{equation*}
\underline{\emph{mdim}_M}(X,\textit{\textbf{f}}\,)= \underline{\emph{mdim}_M}(X,\textit{\textbf{g}})\quad \text{and}\quad\overline{\emph{mdim}_M}(X,\textit{\textbf{f}}\,)= \overline{\emph{mdim}_M}(X,\textit{\textbf{g}}).
\end{equation*}
\end{theorem}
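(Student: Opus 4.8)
The plan is to reduce the equality of the two infima over equivalent metrics to a comparison carried out metric by metric and transported through the equiconjugacy. Write $Y$ for the space on which $\textit{\textbf{g}}$ is defined. From $h_{n+1}\circ f_n=g_n\circ h_n$ I would first record, by induction on $k$, the iterated identity $g_1^{(k)}\circ h_1=h_{k+1}\circ f_1^{(k)}$ for every $k\ge 0$. Given a metric $\rho$ on $Y$ equivalent to the given one, I would pull it back along the \emph{whole} sequence by setting $\rho'(x,y)=\sup_{n\in\mathbb N}\rho(h_n(x),h_n(y))$. As a supremum of metrics $\rho'$ obeys the triangle inequality; it is finite by compactness, positive off the diagonal because $\rho'(x,y)\ge\rho(h_1(x),h_1(y))$, and equicontinuity of $(h_n)$ gives that $d(x_k,x)\to 0$ forces $\rho'(x_k,x)\to 0$, while $\rho'\ge\rho(h_1(\cdot),h_1(\cdot))$ together with continuity of $h_1^{-1}$ gives the converse; hence $\rho'$ is equivalent to $d$.

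Next I would compare the $d_n$-metrics of the two systems (which I decorate with the system and the metric). Using $g_1^{(k)}(h_1(x))=h_{k+1}(f_1^{(k)}(x))$, the index $m=k+1$ in the supremum defining $\rho'$ already reproduces the $k$-th coordinate of the $\textit{\textbf{g}}$-orbit, which yields the lower bound $d_n^{\textit{\textbf{g}},\rho}(h_1(x),h_1(y))\le d_n^{\textit{\textbf{f}},\rho'}(x,y)$. For the opposite direction I would control the remaining terms $\rho(h_m(a),h_m(b))$, with $a=f_1^{(k)}(x),\ b=f_1^{(k)}(y)$, by passing through $d$: the uniform injectivity furnished by the hypothesis (with compactness) gives a modulus $\omega_2$ with $d(a,b)\le\omega_2\big(\rho(h_{k+1}(a),h_{k+1}(b))\big)$ uniformly in $k$, while equicontinuity of $(h_n)$ gives $\rho(h_m(a),h_m(b))\le\omega_1(d(a,b))$. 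With $\omega=\omega_1\circ\omega_2$ this produces $d_n^{\textit{\textbf{f}},\rho'}(x,y)\le\omega\big(d_n^{\textit{\textbf{g}},\rho}(h_1(x),h_1(y))\big)$. Since $h_1$ is a bijection, the two-sided estimate transfers separated sets and gives, at every scale, $\text{sep}(n,\textit{\textbf{g}},\varepsilon)\le\text{sep}(n,\textit{\textbf{f}},\varepsilon)\le\text{sep}(n,\textit{\textbf{g}},\omega^{-1}(\varepsilon))$, hence the same inequalities for the growth rates $\text{sep}(\textit{\textbf{f}},\varepsilon)$.

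The delicate point, and the step I expect to be the main obstacle, is the passage from these inequalities to the ratio $\text{sep}(\cdot,\varepsilon)/|\log\varepsilon|$ defining the metric mean dimension. The left inequality immediately gives $\underline{\text{mdim}_M}(X,\textit{\textbf{f}},\rho')\ge\underline{\text{mdim}_M}(Y,\textit{\textbf{g}},\rho)$, but the right one produces $\text{sep}(\textit{\textbf{g}},\omega^{-1}(\varepsilon))/|\log\varepsilon|$, which tends to the desired limit only if the scale distortion is logarithmically negligible, that is $|\log\omega^{-1}(\varepsilon)|/|\log\varepsilon|\to 1$ as $\varepsilon\to 0$. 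For a merely equicontinuous family this can fail, since already a Hölder modulus changes the limit; it is exactly here that the hypothesis $\inf_n\{d(h_n^{-1}(y_1),h_n^{-1}(y_2)),d(h_n(x_1),h_n(x_2))\}>0$ is essential, as it forces the two moduli $\omega_1,\omega_2$ to be compatible in the logarithmic scale uniformly in $n$, so that $\omega$ does not disturb the rate $\varepsilon\mapsto|\log\varepsilon|$. Granting this, the squeeze yields the exact equality $\underline{\text{mdim}_M}(X,\textit{\textbf{f}},\rho')=\underline{\text{mdim}_M}(Y,\textit{\textbf{g}},\rho)$, and the identical argument with $\limsup$ in place of $\liminf$ treats the upper metric mean dimension.

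Finally I would take infima. Because $\rho\mapsto\rho'$ sends metrics on $Y$ equivalent to the given one to metrics on $X$ equivalent to $d$, with equal metric mean dimension, and because these $\rho'$ form a subfamily of all metrics equivalent to $d$, passing to the infimum gives $\underline{\text{mdim}_M}(X,\textit{\textbf{f}})\le\underline{\text{mdim}_M}(Y,\textit{\textbf{g}})$. Running the whole construction with the equiconjugacy $(h_n^{-1})$ from $\textit{\textbf{g}}$ to $\textit{\textbf{f}}$—legitimate since the hypothesis is symmetric in $h_n$ and $h_n^{-1}$—produces the reverse inequality, and the same reasoning applies to the upper versions. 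This establishes the four equalities of the statement.
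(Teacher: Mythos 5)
The paper contains no proof of Theorem \ref{edee344}: it is quoted from \cite{Fagner}, Theorem 5.1, so there is nothing internal to compare your argument against, and it must be judged on its own terms. Up to and including the two-sided comparison of Bowen metrics your construction is sound: $\rho'(x,y)=\sup_n\rho(h_n(x),h_n(y))$ is a metric equivalent to $d$; the identity $g_1^{(k)}\circ h_1=h_{k+1}\circ f_1^{(k)}$ gives $d_n^{\textit{\textbf{g}},\rho}(h_1(x),h_1(y))\le d_n^{\textit{\textbf{f}},\rho'}(x,y)$; and the hypothesis $\inf_n\{\cdot\}>0$ does, via the compactness argument you sketch, upgrade to equicontinuity of $(h_n^{-1})$, hence to the moduli $\omega_1,\omega_2$ and to $\text{sep}(n,\textit{\textbf{f}},\varepsilon)\le\text{sep}(n,\textit{\textbf{g}},\omega^{-1}(\varepsilon))$.

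The gap is exactly at the step you yourself flag as delicate, and your proposed resolution does not hold. The inequality the final infimum step actually needs is $\underline{\text{mdim}_M}(X,\textit{\textbf{f}},\rho')\le\underline{\text{mdim}_M}(Y,\textit{\textbf{g}},\rho)$ (the easy direction $\ge$ points the wrong way for an infimum), and to extract it from $\text{sep}(\textit{\textbf{f}},\varepsilon)\le\text{sep}(\textit{\textbf{g}},\omega^{-1}(\varepsilon))$ you need $|\log\omega^{-1}(\varepsilon)|/|\log\varepsilon|\to1$, i.e. $\omega(t)=t^{1+o(1)}$. You assert that the hypothesis forces the moduli to be ``compatible in the logarithmic scale,'' but it does not: the condition $\inf_n\{d(h_n^{-1}(y_1),h_n^{-1}(y_2)),d(h_n(x_1),h_n(x_2))\}>0$ is purely qualitative and, combined with equicontinuity of $(h_n)$, yields only that for each $\delta>0$ there is \emph{some} $\eta(\delta)>0$ with $d(x_1,x_2)\ge\delta\Rightarrow\inf_n d(h_n(x_1),h_n(x_2))\ge\eta(\delta)$. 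Nothing prevents $\eta(\delta)$ from decaying like $e^{-1/\delta}$, in which case $\omega_2(t)\sim 1/\log(1/t)$ and the ratio $|\log\omega^{-1}(\varepsilon)|/|\log\varepsilon|$ diverges; even a Hölder-type modulus $\omega(t)\approx t^{\alpha}$, $\alpha<1$, already distorts your squeeze by the factor $1/\alpha$. (For a constant sequence $h_n=h$ the theorem survives anyway because the $\sup_m$ in $\rho'$ collapses and the conjugacy becomes an isometry for the pulled-back metric; but for a genuinely varying sequence the off-diagonal terms $\rho(h_m(f_1^{(k)}(x)),h_m(f_1^{(k)}(y)))$ with $m\ne k+1$ persist and are controlled only through the modulus.) To close the argument you would need either a quantitative strengthening of the hypothesis, a different choice of $\rho'$, or a way of exploiting the infimum over metrics that does not pass through a fixed modulus of continuity.
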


 About ii).    It is well-known that    $  
 h_{top}(\phi^{p})= p\, h_{top}(\phi)$ for any $p\geq1 .$  For the case of  non-autonomous dynamical systems, we have the inequality  $  
 h_{top}(\textit{\textbf{f}}^{\,(p)})\leq p\, h_{top}(\textit{\textbf{f}}\,), $  where   $\textit{\textbf{f}}^{\,(p)}=\{f_1^{(p)},f_p^{(p)},f_{2p}^{(p)},\dots\}$  (see \cite{K-S}, Lemma 4.2). In general, the equality $
 h_{top}(\textit{\textbf{f}}^{\,(p)})= p\, h_{top}(\textit{\textbf{f}}\,)$ is not valid, as we can see in the next example, which was given by Kolyada and Snoha in \cite{K-S}.

 \begin{example} Take $\psi: [0,1]\rightarrow [0,1]$ defined by $ \psi(x)=1-|2x-1|$ for any $x\in [0,1]$. Consider $\textbf{\textit{f}}=(f_{n})_{n\in\mathbb{N}}$, where  
 $$
f_n(x)= \begin{cases}
    \psi^{(n+1)/2}(x), &  \text{ if }n \text{ is odd}, \\
     x/2^{n/2}, & \hbox{ if }n\text{ is even},
      \end{cases}
$$ for any $n\in\mathbb{N}$. 
Then $h_{top}(\textbf{\textit{f}}^{(2)})= 0 $ and   $h_{top}(\textbf{\textit{f}}) \geq \frac{\log 2}{2}$.
 \end{example}
 
   The equality $
 h_{top}(\textit{\textbf{f}}^{\,(p)})= p\, h_{top}(\textit{\textbf{f}}\,)$ holds  if the sequence $\textit{\textbf{f}}=(f_{n})_{n\in\mathbb{N}}$ is equicontinuous (see \cite{K-S}, Lemma 4.4). 
 
 \medskip
 
 Next,  for any $p\geq 1$ we have  $ \text{mdim}(X,\phi^{p}) = p\, \text{mdim}(X,\phi)$  (see \cite{lind}, Proposition 2.7).  In \cite{Fagner}, Proposition 2.3, we proved that 
$   \text{mdim}(X,\textit{\textbf{f}}^{\,(p)})=p\, \text{mdim}(X,\textit{\textbf{f}}\,).
  $ For the metric mean dimension case, the inequality  $ \text{mdim}_{M}(X,\textit{\textbf{f}}^{\,(p)})\leq p\, \text{mdim}_{M}(X,\textit{\textbf{f}}\,)$ always hold. However, this inequality can be strict for single continuous map (and hence for non-autonomous dynamical systems), as we can see in the next example (see \cite{Fagner}, Example 4.7). 
  
 \begin{example}\label{exfagner} Take $g:[0,1]\rightarrow [0,1]$, defined by $x\mapsto |1-|3x-1||$, and  $0=a_{0}<a_{1}< \cdots <a_{n}<\cdots$, where $a_{n}=\sum_{k=1}^{n}6/\pi ^{2}k^{2}$ for $n\geq 1$. For each $n\geq 1$, let 
 $T_{n}: J_{n}:=[a_{n-1},a_{n}] \rightarrow [0,1] $ be the unique increasing affine map from $J_{n}$ (which has length $6/\pi ^{2}n^{2}$)  onto $[0,1]$ and take any strictly increasing sequence of natural numbers $m_{n}$.
 Consider the continuous map  $\psi:[0,1]\rightarrow [0,1]$ such that, for each $n\geq 1$, $\psi|_{J_{n}}= T_{n}^{-1}\circ g^{m_{n}}\circ T_{n}$.  In \cite{Fagner}, Example 4.7, we proved that  $ \text{mdim}_{M}([0,1],\psi,|\cdot|)=1$. On the other hand, by  Proposition 4.5 in \cite{Fagner}, $ \text{mdim}_{M}([0,1],\varphi,|\cdot|)\leq1$ for any continuous map  $\varphi: [0,1]\rightarrow [0,1]$. Therefore $ \text{mdim}_{M}([0,1],\psi^{2},|\cdot|)\leq1<2 \,\text{mdim}_{M}([0,1],\psi,|\cdot|)=2.$
    \end{example}
    
    Given that 0 and 1 are fixed points of $\psi$, it induces a map on the circle with metric mean dimension equal to 1.   
 
 \medskip
 
 A $s$-\textit{horseshoe}   for $\phi:[0,1]\rightarrow [0,1]$ is an interval $J\subseteq [0,1]$ which have a partition into $s$ subintervals $J_{1},\dots,J_{s}$ such that $J\subseteq \phi (\overline{J}_{i})$ for each $i=1,\dots, s$. If $\phi$ has a  $s$-horseshoe with $s\geq 2$, then $h_{top}(\phi)\geq \log s$.  In the above example, each  $J_{n}$ can be divided into $3^{m_{n}}$ closed intervals with the same length  $J_{n}(1),\dots,$ $  J_{n}({3^{m_{n}}})$, such that 
\[ \psi(J_{n}({i}))=J_{n} \quad\text{ for each } i\in\{1,\dots ,3^{m_{n}}\}\] (see \cite{Fagner}, Example 4.7).   Consequently, each $J_{n}$ is a $3^{m_{n}}$-horseshoe for $\psi$, for each  $n\in\mathbb{N}$. A question that arise of this fact is: 
how can we relate the metric mean dimension with the existence of horseshoes?

    \medskip

 iii) Boundedness. Note that if there exists a continuous map $\psi:X\rightarrow X$ with $h_{top}(\psi)>0$, then $h_{top}(\varphi)$ is unbounded as $\varphi$ ranges over the continuous maps on $X$ (since $h_{top}(\psi^{p})=ph_{top}(\psi)$ for any $p\geq 1$). Therefore, or $h_{top}(\phi)=0$ for any   $\phi$ or $h_{top}(\phi)$ is unbounded as $\psi$ ranges over the continuous maps on $X$.  
 
 \medskip

If the topological dimension of $X$ is finite, then  $\text{mdim}(X,\phi)=0$  (see \cite{lind}). On the other hand, if    $X$ has infinite topological dimension  and there exists a continuous map on $X$  with positive mean dimension  then the mean dimension is unbounded (since $\text{mdim}(X,\phi^{p})=p\,\text{mdim}(X,\phi)$ for any  $p\geq 1$), that is, or $\text{mdim}(X,\phi)=0$ for any   $\phi$ or $\text{mdim}(X,\phi)$ is unbounded as $\phi$ ranges over the continuous maps on $X$.   

 \medskip
 
The equalities in Equation \ref{dhhdhdf}  allow us to prove the following proposition (see \cite{Fagner}, Proposition 4.5 and \cite{VV}, Remark 4).

\begin{proposition}\label{erfdy}  For any   $\phi:X\rightarrow X$  we have 
$$\overline{\emph{mdim}_{M}}(X,\phi,d) \leq \overline{\emph{dim}_{B}} (X,d) \quad \text{ and }\quad \underline{\emph{mdim}_{M}}(X,\phi,d) \leq \underline{\emph{dim}_{B}} (X,d) .$$  
\end{proposition}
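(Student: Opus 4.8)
\emph{Proof proposal.} The plan is to realize the autonomous system $(X,\phi)$ as a subsystem of the full one-sided shift $(X^{\mathbb N},\sigma)$ and then invoke the identities in \eqref{dhhdhdf}. Concretely, I would use the orbit embedding
\[
\iota\colon X\longrightarrow X^{\mathbb N},\qquad \iota(x)=(x,\phi(x),\phi^{2}(x),\dots).
\]
Since the coordinate of $\iota(x)$ of smallest index recovers $x$, the map $\iota$ is injective; it is also continuous, and a continuous injection from a compact space is a homeomorphism onto its (compact) image $Y:=\iota(X)\subseteq X^{\mathbb N}$. Moreover $\sigma\circ\iota=\iota\circ\phi$, so $\iota$ conjugates $(X,\phi)$ with the subsystem $(Y,\sigma|_Y)$ of the full shift. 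The point is that, although $\iota$ need not be an isometry, it distorts the relevant scales only by a fixed bounded factor, which is enough for the metric mean dimension.

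The key quantitative step is to compare Bowen separated sets through $\iota$. From the definition of $\tilde d$ in \eqref{mnvc} one has, writing $i_{0}$ for the least index in $\mathbb K$, the coordinatewise lower bound $\tilde d(\bar x,\bar y)\ge c\,d(x_{i_{0}},y_{i_{0}})$ with $c:=2^{-|i_{0}|}\in(0,1]$. Applying this to $\sigma^{k}\iota(x)=(\phi^{k}(x),\phi^{k+1}(x),\dots)$, whose coordinate of least index is $\phi^{k}(x)$, gives $\tilde d(\sigma^{k}\iota(x),\sigma^{k}\iota(y))\ge c\,d(\phi^{k}(x),\phi^{k}(y))$ for every $k$, and hence
\[
\tilde d_{n}(\iota(x),\iota(y))\;\ge\;c\,d_{n}(x,y)\qquad\text{for all }x,y\in X,\ n\in\mathbb N.
\]
Consequently, if $A\subseteq X$ is $(n,\phi,\varepsilon)$-separated then $\iota(A)\subseteq Y\subseteq X^{\mathbb N}$ is $(n,\sigma,c\varepsilon)$-separated and, by injectivity of $\iota$, has the same cardinality. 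Taking $A$ of maximal cardinality yields the termwise estimate $\text{sep}(n,\phi,\varepsilon)\le \text{sep}(n,\sigma,c\varepsilon)$, the right-hand side referring to the full shift on $X^{\mathbb N}$.

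It remains to pass to the limits. Dividing by $n$ and letting $n\to\infty$ gives $\text{sep}(\phi,\varepsilon)\le\text{sep}(\sigma,c\varepsilon)$; dividing by $|\log\varepsilon|$ and writing $\tfrac{1}{|\log\varepsilon|}=\tfrac{|\log(c\varepsilon)|}{|\log\varepsilon|}\cdot\tfrac{1}{|\log(c\varepsilon)|}$, where $\tfrac{|\log(c\varepsilon)|}{|\log\varepsilon|}\to1$ as $\varepsilon\to0$, I can take $\limsup_{\varepsilon\to0}$ and $\liminf_{\varepsilon\to0}$ to obtain
\[
\overline{\text{mdim}_{M}}(X,\phi,d)\le\overline{\text{mdim}_{M}}(X^{\mathbb N},\sigma,\tilde d)\quad\text{and}\quad\underline{\text{mdim}_{M}}(X,\phi,d)\le\underline{\text{mdim}_{M}}(X^{\mathbb N},\sigma,\tilde d).
\]
By \eqref{dhhdhdf} the right-hand sides equal $\overline{\text{dim}_{B}}(X,d)$ and $\underline{\text{dim}_{B}}(X,d)$, which is exactly the claim.

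I would describe the whole argument as a short reduction to \eqref{dhhdhdf} rather than a computation of independent difficulty. The only two points that genuinely need care are verifying that the orbit map $\iota$ is a bona fide bi-continuous embedding with $\sigma\circ\iota=\iota\circ\phi$, and checking that the multiplicative constant $c$ introduced in the separation estimate disappears in the $\varepsilon\to0$ limits (via the ratio $|\log(c\varepsilon)|/|\log\varepsilon|\to1$). The mildly delicate step — and the closest thing to an obstacle — is the coordinatewise domination $\tilde d\ge c\,d(x_{i_{0}},y_{i_{0}})$ together with its compatibility with the shift, which is what transfers $\phi$-dynamics faithfully into $\sigma$-dynamics; everything else is routine.
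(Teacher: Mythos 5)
Your proof is correct and follows essentially the same route the paper intends: it reduces the statement to the shift computation in \eqref{dhhdhdf} by embedding $(X,\phi)$ into $(X^{\mathbb N},\sigma)$ via the orbit map, which is exactly the argument behind the cited Proposition 4.5 of \cite{Fagner} and Remark 4 of \cite{VV}. The separation estimate with the constant $c$ and its removal in the $\varepsilon\to 0$ limit are handled correctly.
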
 

For instances, if $X=[0,1]$, endowed with the metric $d(x,y)=|x-y|$ for $x,y\in X$, then 
$ 
\underline{\text{mdim}_{M}}(X,\phi,d) \leq \overline{\text{mdim}_{M}}(X,\phi,d) \leq 1$.  Example \ref{exfagner} shows that there exists a continuous map $\psi:X\rightarrow X$ such that $ 
\underline{\text{mdim}_{M}}(X,\psi,d) = \overline{\text{mdim}_{M}}(X,\psi,d) = 1$.

\medskip

On the other hand, $ 
\underline{\text{mdim}_{M}}(X,\textit{\textbf{f}},d)$ can be  unbounded as \textit{\textbf{f}}  ranges over the elements in $\mathcal{C}(X)$, regardless of the box dimension of $X$  (see    \cite{Fagner}, Example 4.8). Any  sequence of homeomorphisms on both the interval or   the circle has zero topological entropy (see \cite{K-S}, Theorem D).  
Therefore, the metric mean dimension of any $\textbf{\textit{f}}$ on both the interval or   the circle   is zero.  In the next example we will see that there exist non-autonomous dynamical systems consisting of diffeomorphisms on a surface with infinite metric mean dimension.

\begin{example}\label{hfkenrkflr} Let $\phi:\mathbb{T}^{2}\rightarrow \mathbb{T}^{2}$ be the diffeomorphism induced by a hyperbolic matrix $A$ with eigenvalue $\lambda>1$, where $\mathbb{T}^{2}$ is the torus endowed with the metric $d$ inherited from the plane.    Consider $\textit{\textbf{f}}=(f_{i})_{i\in\mathbb{N}}$ where    $f_{i}=\phi^{2^i} $ for each $i\geq 1$.  We have $\text{Fix}(\phi^{n})=\lambda^{n}+\lambda^{-n}-2, $ where  $\text{Fix}(\psi)$ is the set consisting of fixed points of a continuous map $\psi$ (see \cite{Katok}, Proposition 1.8.1). Furthermore,  $$\text{sep}(n,  \textit{\textbf{f}},1/4)\geq \text{sep}(2^n,  \phi,1/4)\geq \text{Fix}(\phi^{2^n})=\lambda^{2^n}+\lambda^{-2^n}-2$$
(see \cite{Katok}, 
Chapter 3, Section 2.e). Therefore, 
$$\lim_{n\rightarrow \infty}\frac{\text{sep}(n,  \textit{\textbf{f}},1/4)}{n}\geq \lim_{n\rightarrow \infty}\frac{\log \lambda^{2^n}}{n}=\infty,$$
and hence $\text{mdim}_{M}(\mathbb{T}^{2},\textit{\textbf{f}}, d)=\infty$.
\end{example}

iv) {Monoticity} for the case of non-autonomous dynamical systems:  In  \cite{K-S}, Lemma 4.5, Kolyada and Snoha proved that $   h_{top}(\sigma^{i}(\textit{\textbf{f}}\,))\leq h_{top}(\sigma^{j}(\textit{\textbf{f}}\,))  $ for any $i\leq j,$  where $\sigma$ is the right shift   $\sigma ((f_n)_{n\in \mathbb N})=(f_{n+1})_{n\in \mathbb N}$. 
Furthermore, in \cite{JeoCTE},  Corollary 5.6, the author showed that if each $f_{i}$ is a homeomorphism then the equality holds, that is, the topological entropy for non-autonomous dynamical systems is independent on the   first maps on a sequence of homeomorphisms  $\textit{\textbf{f}}=(f_{n})_{n\in\mathbb{Z}}$. These properties are also valid for the mean dimension of non-autonomous dynamical systems  (see \cite{Fagner}, Proposition 2.4). Furthermore, we have if $\textit{\textbf{f}}=(f_n)_{n\in \mathbb N} $   converges uniformly to a continuous map $f:X\to X$, then   
\begin{align*}  \lim_{n\to\infty} h_{top}(\sigma^n(\textit{\textbf{f}}\,))&\leq  h_{top}(f) \text{\quad\quad (see  \cite{K-S}, Theorem E)}\\
\lim_{n\to\infty} \text{mdim}(X,\sigma^n(\textit{\textbf{f}}\,))&\leq  \text{mdim}(X,f) \text{\quad\quad (see  \cite{Fagner}, Theorem 2.6)}\\
\limsup_{n\to\infty} \text{mdim}_{M}(X,\sigma^n(\textit{\textbf{f}}\,))&\leq  \text{mdim}_{M}(X,f)\text{\quad\quad (see \cite{Fagner}, Theorem 3.8)}.
\end{align*}
All the inequalities can be strict (see \cite{K-S}, Section d, and \cite{Fagner}, Example 2.7). 

  \medskip
  
Finally, we discuss v).  Take $\phi:X\rightarrow X$ and $\psi:Y\rightarrow Y$ where $Y$ is a compact metric space with metric $d^{\prime}$. On $X\times Y$ we consider the metric \begin{equation}\label{bnm}(d\times d^{\prime})((x_{1},y_{1}),(x_{2},y_{2}))=d(x_{1},x_{2}) + d^{\prime}(y_{1},y_{2}),\quad \text{ for  }x_{1},x_{2}\in X \text{  and }y_{1},y_{2}\in Y.\end{equation}  The map  $\phi\times \psi : X\times Y\rightarrow X\times Y$ is defined to be $(\phi\times \psi)(x,y)=(\phi(x),\psi(y))$ for any $(x,y)\in X\times Y$. The equality $h_{top}(\phi\times \psi)=h_{top}(\phi)+h_{top}(\psi)$ always hold. Lindenstrauss in \cite{lind}, Proposition 2.8, proved that $\text{mdim}(X\times Y, \phi\times \psi)\leq \text{mdim}(X,\phi)+\text{mdim}(Y ,\psi)$ and this  inequality can be strict. For metric mean dimension we also have: \begin{proposition} The inequality  $$\emph{mdim}_{M}(X\times Y, \phi\times \psi,d\times d^{\prime})\leq \emph{mdim}_{M}(X,\phi,d)+\emph{mdim}_{M}(Y ,\psi,{d^{\prime}})$$ is valid. \end{proposition}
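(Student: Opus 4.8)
The plan is to reduce the inequality to a sub-multiplicativity estimate for spanning sets under direct products and then pass to the metric mean dimensions by dividing through by $|\log\varepsilon|$. Write $\Phi=\phi\times\psi$ and observe that $\Phi^{k}=\phi^{k}\times\psi^{k}$, so that the $n$-th dynamical metric of the product obeys
\[
(d\times d')_n\big((x_1,y_1),(x_2,y_2)\big)=\max_{0\le k\le n-1}\big(d(\phi^{k}x_1,\phi^{k}x_2)+d'(\psi^{k}y_1,\psi^{k}y_2)\big)\le d_n(x_1,x_2)+d'_n(y_1,y_2),
\]
since the maximum of a sum is at most the sum of the maxima. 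The key consequence is that if $E\subseteq X$ is an $(n,\phi,\varepsilon)$-spanning set and $F\subseteq Y$ is an $(n,\psi,\varepsilon)$-spanning set, then $E\times F$ is an $(n,\Phi,2\varepsilon)$-spanning set for $X\times Y$: given $(x,y)$ one picks $e\in E$ with $d_n(x,e)<\varepsilon$ and $f\in F$ with $d'_n(y,f)<\varepsilon$, whence $(d\times d')_n((x,y),(e,f))<2\varepsilon$. This yields $\text{span}(n,\Phi,2\varepsilon)\le \text{span}(n,\phi,\varepsilon)\cdot\text{span}(n,\psi,\varepsilon)$.

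From here I would take logarithms, divide by $n$ and let $n\to\infty$ to get the scale-$\varepsilon$ inequality $\text{span}(\Phi,2\varepsilon)\le \text{span}(\phi,\varepsilon)+\text{span}(\psi,\varepsilon)$. Dividing by $|\log 2\varepsilon|$ and inserting the factor $|\log\varepsilon|/|\log 2\varepsilon|$, which tends to $1$ as $\varepsilon\to0$, produces the pointwise bound
\[
\frac{\text{span}(\Phi,2\varepsilon)}{|\log 2\varepsilon|}\le\Big(\frac{\text{span}(\phi,\varepsilon)}{|\log\varepsilon|}+\frac{\text{span}(\psi,\varepsilon)}{|\log\varepsilon|}\Big)\frac{|\log\varepsilon|}{|\log 2\varepsilon|}.
\]
Because $\varepsilon\mapsto 2\varepsilon$ merely reparametrizes a punctured neighbourhood of $0$, applying $\limsup_{\varepsilon\to0}$ to the left-hand side reproduces exactly $\overline{\text{mdim}_M}(X\times Y,\Phi,d\times d')$; on the right-hand side the factor tending to $1$ is harmless and $\limsup(u+v)\le\limsup u+\limsup v$ gives the upper-case inequality immediately, using that the upper metric mean dimension may be computed from spanning sets.

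The step I expect to be the genuine obstacle is the lower metric mean dimension. Taking $\liminf_{\varepsilon\to0}$ of the same pointwise inequality is legitimate and yields $\underline{\text{mdim}_M}(X\times Y,\Phi,d\times d')\le\liminf_{\varepsilon\to0}\big(\tfrac{\text{span}(\phi,\varepsilon)}{|\log\varepsilon|}+\tfrac{\text{span}(\psi,\varepsilon)}{|\log\varepsilon|}\big)$, but one cannot in general split this into $\underline{\text{mdim}_M}(X,\phi,d)+\underline{\text{mdim}_M}(Y,\psi,d')$, since $\liminf$ is only super-additive. To close the gap I would try to exhibit a single sequence $\varepsilon_j\to0$ along which both quotients simultaneously approach their lower limits; this succeeds automatically, for instance, whenever one of the two factor limits is an honest limit (as happens in the box-dimension regime of Proposition \ref{erfdy}, where $\underline{\text{mdim}_M}=\overline{\text{mdim}_M}$ for many concrete $X$). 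I therefore regard the spanning-set estimate as the robust core of the argument and the $\liminf$ bookkeeping for the lower metric mean dimension as the delicate point that must be handled with care.
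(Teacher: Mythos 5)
Your spanning-set estimate is correct and is the standard route: the inequality $(d\times d')_n\le d_n+d'_n$, the fact that $E\times F$ is $(n,\phi\times\psi,2\varepsilon)$-spanning, the resulting bound $\mathrm{span}(n,\phi\times\psi,2\varepsilon)\le \mathrm{span}(n,\phi,\varepsilon)\cdot\mathrm{span}(n,\psi,\varepsilon)$, and the substitution $\delta=2\varepsilon$ (harmless because $|\log\varepsilon|/|\log 2\varepsilon|\to1$) give a complete and correct proof for the \emph{upper} metric mean dimension. The paper in fact states this proposition without any proof, so there is nothing to compare against line by line; your argument is the one a reader would be expected to supply.

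The step you flag as delicate is, however, a genuine gap, and it cannot be closed in general. Taking $\liminf$ of your pointwise inequality only yields the mixed bound $\underline{\mathrm{mdim}_M}(X\times Y,\phi\times\psi,d\times d')\le \underline{\mathrm{mdim}_M}(X,\phi,d)+\overline{\mathrm{mdim}_M}(Y,\psi,d')$, since $\liminf(u+v)\le\liminf u+\limsup v$ is the best available splitting; a common sequence $\varepsilon_j\to0$ realizing both lower limits simultaneously need not exist. Worse, the lower-case statement is actually false as a general claim: by \eqref{dhhdhdf} and the isometric conjugacy of Example \ref{mnbad}, one has $\underline{\mathrm{mdim}_M}(X^{\mathbb Z}\times Y^{\mathbb Z},\sigma_1\times\sigma_2,\tilde d\times\tilde d')=\underline{\dim_B}(X\times Y,d\times d')$, and it is classical (Tricot) that lower box dimension is \emph{super}additive under products, with compact sets for which $\underline{\dim_B}(X\times Y)>\underline{\dim_B}(X)+\underline{\dim_B}(Y)$ strictly (two Cantor-type sets whose covering numbers oscillate out of phase). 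So the shift on such a product violates the lower-case inequality. Your escape hatch (assume one of the two quantities is an honest limit, e.g. when upper and lower metric mean dimensions of one factor coincide) is exactly the right additional hypothesis under which the lower-case conclusion does hold; without it, the proposition should be read as a statement about $\overline{\mathrm{mdim}_M}$ only, or restated in the mixed form above.
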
 

In the next example we will prove that the above  inequality   can be strict. 

 \begin{example}\label{mnbad}  
 Let $(X,d)$ and $(Y,d^{\prime})$ be any compact metric spaces   such that  $$\text{dim}_{B}(X\times Y,d\times d^{\prime})<\text{dim}_{B}(X,d)+ \text{dim}_{B}(Y,d^{\prime}).$$ The metric $\tilde{d}\times \tilde{d}^{\prime}$ on $X^{\mathbb{Z}}\times Y^{\mathbb{Z}}$ is defined by $$(\tilde{d}\times \tilde{d}^{\prime})((\bar{x},\bar{y}),(\bar{z},\bar{w}))=  \sum_{i\in\mathbb{Z}}\frac{1}{2^{|i|}}d(x_{i},z_{i})+\sum_{i\in\mathbb{Z}}\frac{1}{2^{|i|}}d^{\prime}(y_{i},w_{i}),$$  for $\bar{x}=(x_{i})_{i\in\mathbb{Z}},\,  \bar{z}=(z_{i})_{i\in\mathbb{Z}} \in X^{\mathbb{Z}}, \bar{y}=(y_{i})_{i\in\mathbb{Z}},\,  \bar{w}=(w_{i})_{i\in\mathbb{Z}} \in Y^{\mathbb{Z}}
$ (see   \eqref{mnvc} and \eqref{bnm}). Furthermore, the metric  $(d\times d^{\prime})^{\ast}$ on  $(X\times Y)^{\mathbb{Z}}$ is given by \begin{align*}(d\times d^{\prime})^{\ast}((\overline{x,y}),(\overline{z,w}))&=  \sum_{i\in\mathbb{Z}}\frac{1}{2^{|i|}}(d\times d^{\prime})((x_{i},y_{i}),(z_{i},w_{i}))\\
&= \sum_{i\in\mathbb{Z}}\frac{1}{2^{|i|}}d(x_{i},z_{i})+\sum_{i\in\mathbb{Z}}\frac{1}{2^{|i|}}d^{\prime}(y_{i},w_{i}),\end{align*}  for $(\overline{x,y})=(x_{i},y_{i})_{i\in\mathbb{Z}} $ and $(\overline{z,w})=(z_{i},w_{i})_{i\in\mathbb{Z}} $ in $(X\times Y)^{\mathbb{Z}}$. Consequently, the bijection 
$$ \Theta :  (X\times Y)^{\mathbb{Z}}\rightarrow  X^{\mathbb{Z}}\times Y^{\mathbb{Z}}, \quad \text{given by }(x_{i},y_{i})_{i\in\mathbb{Z}} \mapsto ((x_{i})_{i\in\mathbb{Z}},(y_{i})_{i\in\mathbb{Z}}), $$ 
is an isometry and furthermore  the diagram  \[ \begin{CD}
     (X\times Y)^{\mathbb{Z}} @> \sigma >>  (X\times Y)^{\mathbb{Z}} \\
    @VV \Theta V      @VV \Theta V      \\
    X^{\mathbb{Z}}\times Y^{\mathbb{Z}} @> \sigma_{1}\times \sigma_{2} >>  X^{\mathbb{Z}}\times Y^{\mathbb{Z}}
  \end{CD}
\]
is commutative, where $\sigma$ is the shift on $(X\times Y)^{\mathbb{Z}} $, $\sigma_{1}$ is the shift on $X^{\mathbb{Z}}$ and  $\sigma_{2}$ is the shift on $Y^{\mathbb{Z}}$. It is clear that the metric mean dimension is invariant under isometric topological conjugacy. Therefore, 
\begin{align*}  \text{mdim}_{M}(X^{\mathbb{Z}}\times Y^{\mathbb{Z}}, \sigma_{1}\times \sigma_{2}, \tilde{d}_{1}\times \tilde{d}_{2})& =  \text{mdim}_{M}((X\times Y)^{\mathbb{Z}}, \sigma,(d\times d^{\prime})^{\ast})\\
&= \overline{\text{dim}_{B}} (X\times Y,d\times d^{\prime}) <\text{dim}_{B}(X,d)+ \text{dim}_{B}(Y,d^{\prime})\\
&= \text{mdim}_{M}(X^{\mathbb{Z}},\sigma_{1},\tilde{d}) + \text{mdim}_{M}(Y^{\mathbb{Z}}, \sigma_{2}, \tilde{d}^{\prime}).
\end{align*}
\end{example}
 
We finish this section with a remark.  If $\phi: X\rightarrow X$ is a factor of $\psi:Y\rightarrow Y$ then $h_{top}(\phi)\leq h_{top}(\psi)$. This fact is not valid for the (metric) mean dimension (see \cite{lind}). Indeed, let $\textbf{\textit{C}}$  be the Cantor  set and take a surjective continuous map $\Phi : \textbf{\textit{C}}\rightarrow [0,1]$.   We have that $([0,1]^{\mathbb{Z}},\sigma)$  is a factor of    $(\textbf{\textit{C}}^{\mathbb{Z}},\sigma)$ with factor mapping  \begin{align*}
    \tilde{\Phi} : \textbf{\textit{C}}^{\mathbb{Z}}&\rightarrow [0,1]^{\mathbb{Z}}\\
  (\dots, x_{-1},x_{0},x_{1},\dots)&\mapsto  (\dots, \Phi(x_{-1}),\Phi(x_{0}),\Phi(x_{1}),\dots).
\end{align*}   
Both  $([0,1]^{\mathbb{Z}},\sigma)$ and  $(\textbf{\textit{C}}^{\mathbb{Z}},\sigma)$ have infinite  topological entropy.
 However,  $$0=\text{mdim}(\textbf{\textit{C}}^{\mathbb{Z}},\sigma)<\text{mdim}([0,1]^{\mathbb{Z}},\sigma)=1$$
 and  $$\frac{\log 2}{\log 3}=\text{mdim}_{M}(\textbf{\textit{C}}^{\mathbb{Z}},\sigma,\tilde{d})<\text{mdim}_{M}([0,1]^{\mathbb{Z}},\sigma,\tilde{d})=1.$$

 
  
\section{On the continuity of the (metric) mean dimension} 
 
Block, in \cite{block}, studied the continuity of the topological entropy map on the set consisting of continuous maps on the Cantor set, the interval and the circle. Following the ideas of Block, in this section we will  study the continuity  of the mean dimension on the set of continuous maps on the product space $X^{\mathbb{K}}$, for $\mathbb{K}=\mathbb{Z}$ o $\mathbb{N}$, and furthermore the continuity of the  metric mean dimension map on the set consisting of continuous maps on      the product $X^{\mathbb{K}}$ for   $\mathbb{K}=\mathbb{Z}$ or $ \mathbb{N}$, the Cantor set,  
the  interval and the circle.  

\medskip

On $C^{0}(X)$ we will consider the metric \begin{equation}\label{cbenfn} d(\phi,\varphi)=\max_{x\in X}d(\phi(x),\varphi(x))\quad \quad\text{ for any }\phi, \varphi \in   C^{0}(X).\end{equation}
Note that if $d^{\prime}$ is other metric on $X$ which induces the same topology that $d$ on $X$, then $ d^{\prime}(\phi,\varphi)=\max_{x\in X}d^{\prime}(\phi(x),\varphi(x))$,  for any $ \phi, \varphi \in   C^{0}(X)$, induces the same topology  that the metric $d$ defined on \eqref{cbenfn} on $C^{0}(X)$. Therefore, the continuity of  $\text{mdim}:C^{0}(X)\rightarrow \mathbb{R}\cup\{\infty\}$ and  $\text{mdim}_{M}:C^{0}(X)\rightarrow \mathbb{R}\cup\{\infty\}$ does  not depend on equivalent metrics on $X$.
 
 \medskip

Note that when the topological dimension of $X$ is finite,  for any continuous map $\phi$ on $X$ we have  $\text{mdim} (X, \phi)=0$ (see \cite{lind}), and therefore   $\text{mdim}:C^{0}(X)\rightarrow \mathbb{R}$, $\phi\mapsto \text{mdim}(X,\phi)$,  is the zero constant map. Hence, it remains to study the continuity of $\text{mdim}:C^{0}(X)\rightarrow \mathbb{R}$ when $\text{dim}(X)=\infty$.  Particular cases of infinite dimensional spaces are the product spaces $X^{\mathbb{Z}}$ or $X^{\mathbb{N}}$. 

 \begin{theorem}\label{bcbcbcb1} Take $\mathbb{K}=\mathbb{N}$ or $\mathbb{Z}$. 
If there exist $\psi\in C^{0}(X^{\mathbb{K}})$ with positive (metric) mean dimension, then the map $\emph{mdim}_{M}:C^{0}(X^{\mathbb{K}})\rightarrow \mathbb{R}\cup \{\infty\}$ (the map $\emph{mdim} :C^{0}(X^{\mathbb{K}})\rightarrow \mathbb{R}\cup \{\infty\}$) is not continuous anywhere. 
\end{theorem}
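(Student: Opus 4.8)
The plan is to reduce everything to two density assertions. Write $\Phi$ for either $\text{mdim}_M(X^{\mathbb K},\cdot\,,\tilde d)$ or $\text{mdim}(X^{\mathbb K},\cdot\,)$. I would show that, for the metric \eqref{cbenfn}, (a) the set $\mathcal Z=\{\phi:\Phi(\phi)=0\}$ is dense, and (b) there is a fixed $c>0$ with $\mathcal P_c=\{\phi:\Phi(\phi)\ge c\}$ dense. These give the theorem at once: if $\Phi(\phi)<c$ then $\phi$ is a limit of maps of $\mathcal P_c$ whose values stay $\ge c$, while if $\Phi(\phi)>0$ then $\phi$ is a limit of maps of $\mathcal Z$ whose values are $0$; since every $\phi$ satisfies $\Phi(\phi)<c$ or $\Phi(\phi)>0$, in all cases the value of $\Phi$ jumps and $\Phi$ is discontinuous at $\phi$. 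Throughout I assume $\overline{\text{dim}_{B}}(X,d)<\infty$ (hence $\dim X<\infty$), which is the setting of interest and the one making $\mathcal Z$ large.

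For (a), given $\phi$ and $\varepsilon>0$, I define $g$ by replacing each coordinate function $\phi_i$ with $|i|\le N$ by a Lipschitz approximation and each coordinate with $|i|>N$ by a fixed constant. Since Lipschitz maps approximate continuous ones and, by the summable weights in $\tilde d$, the tail coordinates contribute at most $C2^{-N}$ to $d(g,\phi)$, I can make $d(g,\phi)<\varepsilon$. Then $g$ is Lipschitz and $g(X^{\mathbb K})$ lies in the invariant finite product $F=X^{\{|i|\le N\}}$, which is finite dimensional. For $\Phi=\text{mdim}$ this already forces $\Phi(g)=0$, since a system supported after one step on a finite–dimensional space has zero mean dimension (\cite{lind}); the first, non-dynamical coordinate in $d_n$ only adds an $n$–independent constant to $\log\text{sep}$ and washes out. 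For $\Phi=\text{mdim}_M$, $g|_F$ is Lipschitz on the finite–dimensional $F$, so the classical estimate $h_{top}(g|_F)\le \overline{\text{dim}_{B}}(F)\,\log^+\mathrm{Lip}(g|_F)<\infty$ together with Remark~\ref{ueurur} gives $\Phi(g)=0$. Hence $\mathcal Z$ is dense.

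For (b) the two cases are handled differently. For the metric mean dimension I avoid recurrence entirely: let $g$ act as $\phi$ on the coordinates $|i|\le N$ and as a one–sided shift on the tail coordinates, which for $\tilde d$ is a uniformly scaled copy of $(X^{\mathbb K},\sigma)$; then $d(g,\phi)\le C2^{-N}<\varepsilon$, and the tail shift is a $1$–Lipschitz factor of $g$, so $\text{sep}(n,g,\cdot)\ge\text{sep}(n,\sigma_{\mathrm{tail}},\cdot)$ and therefore $\Phi(g)\ge\overline{\text{dim}_{B}}(X,d)>0$ by \eqref{dhhdhdf}, giving $\mathcal P_c$ with $c=\overline{\text{dim}_{B}}(X,d)$ (the hypothesis guarantees this is positive). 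For the mean dimension no factor argument is available, since mean dimension is not monotone under factors (as the remark closing Section~3 shows), so I instead implant an invariant copy of a power of $\psi$. Pick a recurrent point $q$ of $\phi$; for small $\delta$ there is $m$ with $\phi^m(q)$ within $\delta$ of $q$, so $q,\dots,\phi^{m-1}(q)$ almost closes up. Modifying $\phi$ only inside small balls about these points yields $g$ with $d(g,\phi)<\varepsilon$ that closes the loop and whose first–return map $g^m$ on a scaled sub-box $K_0\cong X^{\mathbb K}$ about $q$ is conjugate to $\psi^{m}$. Then $K=\bigcup_{j<m}g^j(K_0)$ is invariant and, by subsystem monotonicity together with the power rule $\text{mdim}(\psi^m)=m\,\text{mdim}(\psi)$ (property ii)), one gets $\text{mdim}(g)\ge\text{mdim}(g|_K)=\text{mdim}(\psi)>0$, a bound independent of $m$; this gives $\mathcal P_c$ with $c=\text{mdim}(\psi)$.

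The hard part will be the implantation used for the mean dimension: arranging that a perturbation smaller than $\varepsilon$ in the metric \eqref{cbenfn} both closes an almost–recurrent loop of $\phi$ exactly and realizes $\psi^m$ as a genuine first–return map on an invariant sub-box conjugate to $X^{\mathbb K}$. The delicate point is uniformity in $\varepsilon$: a tower over a return of length $m$ a priori only sees $\tfrac1m\,\text{mdim}(\psi)$, which would degenerate as $\varepsilon\to0$ forces $m\to\infty$; placing $\psi^{m}$ rather than $\psi$ at the base is precisely what compensates the height of the tower and keeps the value pinned at $\text{mdim}(\psi)$. Checking that such sub-boxes lie in arbitrarily small balls — they do, since fixing the coordinates $|i|\le M$ and freeing the rest produces a copy of $X^{\mathbb K}$ scaled by $2^{-M}$ — and that the whole construction stays continuous and $\varepsilon$–close, is where the real work lies; by contrast the metric mean dimension case is soft once the tail–shift factor is used.
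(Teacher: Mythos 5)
Your global strategy (two dense sets, one on which $\Phi$ vanishes and one on which it is bounded below by a fixed $c>0$) is sound, and the tail--shift factor argument for the metric mean dimension lower bound is clean: the projection onto the coordinates $|i|>N$ is distance-nonincreasing for $\tilde d$, so separated sets lift and \eqref{dhhdhdf} applies. But there are genuine gaps. The centrepiece of your mean-dimension case --- implanting $\psi^{m}$ as a first-return map over an almost-recurrent orbit --- is precisely the part you defer, and it does not reduce to routine checking. The intermediate sets $g(K_0),\dots,g^{m-1}(K_0)$ are merely continuous images of a sub-box under the unperturbed $\phi$: they carry no box structure, $\phi$ may collapse them (even to a point), and they need not be pairwise disjoint, so one cannot ``close the loop'' by modifying $\phi$ only near the orbit and obtain a return map conjugate to $\psi^{m}$; redefining $g$ on the boxes themselves forces you to extend a partial continuous map into $X^{\mathbb K}$, which is not available for an arbitrary compact $X$. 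The paper sidesteps all of this by exploiting the product structure of the phase space: it grafts $\psi$ directly onto the tail coordinates of the output, $\phi_n(\bar x)=(y_1(\bar x),\dots,y_n(\bar x),\psi(\bar x))$, so no recurrence, no towers and no extension lemmas are needed (at the price of the identity $\text{mdim}(X^{\mathbb K},\phi_n)=\text{mdim}(X^{\mathbb K},\psi)$, which the paper only asserts). Relatedly, your constant $c=\overline{\dim_B}(X,d)$ in the metric case is not known to be positive: the hypothesis yields a map on $X^{\mathbb K}$ with positive metric mean dimension, and via Proposition \ref{erfdy} this only bounds $\overline{\dim_B}(X^{\mathbb K},\tilde d)$ from below, a quantity that is typically infinite even when $\overline{\dim_B}(X,d)=0$; you should take $c$ from $\psi$ itself, as the paper does.

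Second, your density of the zero set rests on two hypotheses the theorem does not grant. You assume $\overline{\dim_B}(X,d)<\infty$ outright, and you need continuous maps $X^{\mathbb K}\to X$ to be uniformly approximable by Lipschitz maps, which fails for a general compact metric space $X$ (there need not be a Lipschitz retraction of any ambient neighbourhood onto $X$). The paper's version of this step --- truncate the output to finitely many coordinates and fill the tail with a constant $x_0$ --- avoids Lipschitz approximation entirely; for the mean dimension the image then lies in a finite-dimensional invariant set (still tacitly needing $\dim X<\infty$), while for the metric mean dimension the resulting vanishing claim is genuinely delicate, and your instinct to control it through an entropy bound of the form $h_{top}\le \overline{\dim_B}\cdot\log^{+}\mathrm{Lip}$ together with Remark \ref{ueurur} is the right one --- but it must be implemented without the unavailable approximation. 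In short, the skeleton is correct, but the two load-bearing constructions (the Lipschitz approximation and the $\psi^{m}$-implantation) are respectively unavailable in this generality and not actually carried out, whereas the paper's proof runs entirely through coordinatewise surgery on the product $X^{\mathbb K}$.
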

\begin{proof} We will prove the case $\mathbb{K}=\mathbb{N}$  (the case $\mathbb{K}=\mathbb{Z}$  is analogous). 
Fix a continuous map $\phi:X^{\mathbb{N}}\rightarrow X^{\mathbb{N}}$. Set $\bar{x}=(x_{1},x_{2},\dots)\in X^{\mathbb{N}}$ and assume that  $\phi(x_{1},x_{2},\dots)=(y_{1}(\bar{x}),y_{2}(\bar{x}),\dots)$. First, suppose that $\text{mdim}_{M}(X^{\mathbb{N}},\phi,\tilde{d})>0$ ($\text{mdim}(X^{\mathbb{N}},\phi)>0$). Consider the sequence of continuous maps on $X^{\mathbb{N}}$, $(\phi_{n})_{n\in\mathbb{N}}$, defined by 
$$   \phi_{n}(\bar{x})=(y_{1}(\bar{x}),y_{2}(\bar{x}),\dots,y_{n}(\bar{x}),x_{0},x_{0},\dots)\quad
\text{for any }n\in\mathbb{N}\text{ and some }x_{0}\in X.$$ 
It is clear that $\text{mdim}_{M}(X^{\mathbb{N}},\phi_{n},\tilde{d})=0$ ($\text{mdim} (X^{\mathbb{N}},\phi_{n} )=0$) for any $n\in\mathbb{N}$ and $\phi_{n}$ converges uniformly to $\phi$ as $n\rightarrow \infty$. 

Now, assume that $\text{mdim}_{M}(X^{\mathbb{N}},\phi,\tilde{d})=0$ ($\text{mdim} (X^{\mathbb{N}},\phi)=0$). Consider the sequence of continuous maps on $X^{\mathbb{N}}$, $(\phi_{n})_{n\in\mathbb{N}}$, where for any $n\geq 1$, $\phi_{n}$ is defined by 
$$   \phi_{n}(\bar{x})=(y_{1}(\bar{x}),y_{2}(\bar{x}),\dots,y_{n}(\bar{x}),z_{1}(\bar{x}),z_{2}(\bar{x}),\dots),\quad
\text{for any }\bar{x}\in X^{\mathbb{N}},$$ where $(z_{1}(\bar{x}),z_{2}(\bar{x}),\dots)=\psi(\bar{x})$ and $\psi$ is   continuous   with positive (metric) mean dimension. 
Then $\phi_{n}$ converges uniformly to $\phi$ as $n\rightarrow \infty$.  We can to prove that $\text{mdim}_{M}(X^{\mathbb{N}},\phi_{n},\tilde{d})=\text{mdim}_{M}(X^{\mathbb{N}},\psi,\tilde{d})$ for any $n\geq 1$ ($\text{mdim}(X^{\mathbb{N}},\phi_{n})=\text{mdim}(X^{\mathbb{N}},\psi)$ for any $n\geq 1$).
\end{proof}

Any $x \in [0, 1]$  is written in base 3 as
$$x=\sum _{i=n}^{\infty}x_{n}3^{-n}\text{ where }x_{n}\in \{0,1,2\}.$$  A number $x$ belongs to the middle third Cantor set  if and only if    no $x_n$  is equal to one.  Therefore, we can consider  \begin{equation}\label{nevsuidnf} \textit{\textbf{C}}=\{(x_{1},x_{2},\dots ): x_n=0,2\text{  for }n\in\mathbb{N}\}=\{0,2\}^{\mathbb{N}}\end{equation}
as being  the Cantor set endowed with the metric 
\begin{equation}\label{gsgdf} d((x_{1},x_{2},\dots),(y_{1},y_{2},\dots))=\sum_{i=1}^{\infty} 3^{-n}|x_{n}-y_{n}|=|\sum _{i=1}^{\infty}x_{n}3^{-n}-\sum _{i=1}^{\infty}y_{n}3^{-n}|. \end{equation}

  Bobok and Zindulka shown that  if  $X$ is an uncountable compact metrizable space of topological dimension zero, then given any $a \in  [0, \infty]$ there is a homeomorphism on $X$ whose topological entropy
is $a$. In particular, there exist homeomorphisms on the Cantor set with infinite topological entropy. In the next example we prove that there exists a homeomorphism on the Cantor ser with maximum metric mean dimension (the metric mean dimension of any map on $C^{0}(\textit{\textbf{C}} )$ is  less or equal to $\frac{\log 2}{\log 3}$ by Proposition \ref{erfdy}).

\begin{example}\label{gshfkf}
For any $k\geq 1$ and $z_{1},\dots z_{k}\in \{0,2\}$, set  $$\textit{\textbf{C}}_{z_{1},\dots,z_{k}}=\{(z_{1},\dots,z_{k},{x}_{1},x_{2},\dots): x_{j}\in \{0,2\}\text{ for }j\geq 1\}.$$ 
Note that if $(z_{1},\dots,z_{k})\neq (w_{1},\dots,w_{k})$, then $ \textit{\textbf{C}}_{z_{1},\dots,z_{k}}\cap \textit{\textbf{C}}_{w_{1},\dots,w_{k}}=\emptyset $ for $j,l=1,2$.  
Furthermore,   each  $\textit{\textbf{C}}_{z_{1},\dots,z_{k}}$ is homeomorphic to $\textit{\textbf{C}}$ via the  homeomorphism 
$$T_{z_{1},\dots,z_{k}}:\textit{\textbf{C}}_{z_{1},\dots,z_{k}}\rightarrow \textit{\textbf{C}},\quad (z_{1},\dots,z_{k},x_{1},x_{2},\dots)\mapsto (x_{1},x_{2},\dots),$$ which is Lipschitz.     
Take $\psi:\textit{\textbf{C}}\rightarrow \textit{\textbf{C}}$ defined by $$ \psi|_{\textit{\textbf{C}}_{z_{1},\dots,z_{k}}}= T_{z_{1},\dots,z_{k}}^{-1}\sigma^{k} T_{z_{1},\dots,z_{k}} .$$
For any $k\geq 1,$ take  $\varepsilon_k >0$ such  that $3^{-(k+1)}\leq\varepsilon_{k}<3^{-k}$.
 If $A\subseteq  \textit{\textbf{C}}$  is a  $ (nk,\sigma, \varepsilon_{nk})$-separated set, then  $$\tilde{A}=\{(z_{1},\dots,z_{k},x_{1},\dots ,x_{n},.\, .\, . ): (x_{1},\dots, x_{n},.\, . \, .) \in A\}\subseteq \textit{\textbf{C}}_{z_{1},\dots,z_{k}}$$ is a $(n,\psi ,\varepsilon_{k})$-separated set. 
 Therefore 
$  \text{sep}(n,\psi,\varepsilon_{k}) \geq 2^{nk}$ and hence 
\begin{align*}
 \frac{\log\text{sep}(n,\psi,\varepsilon_{k}) }{n|\log \varepsilon_{k}|}& \geq  \frac{\log(2^{nk}) }{n\log (3^{k})}\geq  \frac{n{k}\log2}{nk\log 3}=\frac{\log 2}{\log 3}.
\end{align*}
Therefore 
$$
\frac{\log 2}{\log 3}\geq \overline{\text{mdim}_M}({\textit{\textbf{C}}},\psi,d)\geq \underline{\text{mdim}_M}({\textit{\textbf{C}}},\psi,d)\geq  \frac{\log 2}{\log 3},
$$
that is, $   {\text{mdim}_M}({\textit{\textbf{C}}},\psi,d)=\frac{\log 2}{\log 3}.$
\end{example}

It follows from \eqref{nevsuidnf},  Theorem \ref{bcbcbcb1} and Example \ref{gshfkf} that: 

\begin{theorem}\label{bcbcbcb}
The map $\emph{mdim}_{M}:C^{0}(\textit{\textbf{C}})\rightarrow \mathbb{R}$ is not continuous anywhere. 
\end{theorem}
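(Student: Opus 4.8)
The plan is to obtain Theorem~\ref{bcbcbcb} as a direct application of Theorem~\ref{bcbcbcb1} with $\mathbb{K}=\mathbb{N}$, once the Cantor set has been presented as the appropriate countable product and one map of positive metric mean dimension has been exhibited. Concretely, I would use the description \eqref{nevsuidnf}, $\textit{\textbf{C}}=\{0,2\}^{\mathbb{N}}$, to regard $\textit{\textbf{C}}$ as $X^{\mathbb{N}}$ with $X=\{0,2\}$ a two-point compact metric space carrying the distance inherited from $\mathbb{R}$. This puts $\textit{\textbf{C}}$ exactly in the setting of Theorem~\ref{bcbcbcb1}.

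The one point that needs care, and which I regard as the main (though minor) obstacle, is that the metric $d$ of \eqref{gsgdf} uses the weights $3^{-n}$, whereas Theorem~\ref{bcbcbcb1} is phrased for the metric $\tilde d$ of \eqref{mnvc}, which uses $2^{-|i|}$. Both are product-type metrics of the same shape, a fixed summable sequence of positive weights times the coordinatewise distance in the finite factor, and the proof of Theorem~\ref{bcbcbcb1} uses the metric only through two features: first, that if two maps of $\textit{\textbf{C}}$ agree on the first $n$ coordinates then their sup-distance is at most $\sum_{k>n}3^{-k}\cdot 2=3^{-n}$, so the approximating sequence $\phi_n$ converges uniformly to $\phi$; and second, that prepending finitely many continuous coordinate functions does not alter the exponential growth rate defining the metric mean dimension. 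Both features hold verbatim for $d$, so the conclusion of Theorem~\ref{bcbcbcb1} is available with $d$ in place of $\tilde d$ (alternatively, one may invoke the remark after \eqref{cbenfn} that the continuity question for $\text{mdim}_M$ on $C^{0}(\textit{\textbf{C}})$ is insensitive to the equivalent metric used to topologize the domain).

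Next I would supply the hypothesis of Theorem~\ref{bcbcbcb1}, namely a map $\psi\in C^{0}(\textit{\textbf{C}})$ of positive metric mean dimension. This is exactly the content of Example~\ref{gshfkf}, which produces a homeomorphism $\psi$ of $\textit{\textbf{C}}$ with $\text{mdim}_{M}(\textit{\textbf{C}},\psi,d)=\tfrac{\log 2}{\log 3}>0$. With such a $\psi$ fixed, Theorem~\ref{bcbcbcb1} applies to every $\phi\in C^{0}(\textit{\textbf{C}})$: in the case $\text{mdim}_{M}(\textit{\textbf{C}},\phi,d)>0$ one truncates the image of $\phi$ and freezes the tail coordinates to obtain $\phi_n\to\phi$ with $\text{mdim}_{M}(\textit{\textbf{C}},\phi_n,d)=0$, while in the case $\text{mdim}_{M}(\textit{\textbf{C}},\phi,d)=0$ one fills the tail coordinates with $\psi$ to obtain $\phi_n\to\phi$ with $\text{mdim}_{M}(\textit{\textbf{C}},\phi_n,d)=\text{mdim}_{M}(\textit{\textbf{C}},\psi,d)=\tfrac{\log 2}{\log 3}$. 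In either case $\text{mdim}_{M}$ fails to be continuous at $\phi$, so it is continuous nowhere.

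Finally I would record that the codomain is genuinely $\mathbb{R}$ and not $\mathbb{R}\cup\{\infty\}$: by Proposition~\ref{erfdy} we have $\text{mdim}_{M}(\textit{\textbf{C}},\phi,d)\leq\overline{\text{dim}_{B}}(\textit{\textbf{C}},d)=\tfrac{\log 2}{\log 3}<\infty$ for every $\phi\in C^{0}(\textit{\textbf{C}})$, so all values taken are finite. This completes the argument; beyond the verification that the natural Cantor metric is of the product form for which Theorem~\ref{bcbcbcb1} was proved, the statement is immediate once \eqref{nevsuidnf}, Theorem~\ref{bcbcbcb1} and Example~\ref{gshfkf} are assembled.
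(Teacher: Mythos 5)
Your proposal is correct and follows exactly the paper's route: the paper derives Theorem \ref{bcbcbcb} in one line by combining the identification \eqref{nevsuidnf} of $\textit{\textbf{C}}$ with $\{0,2\}^{\mathbb{N}}$, Theorem \ref{bcbcbcb1}, and the map of positive metric mean dimension from Example \ref{gshfkf}. Your additional remarks on the harmless mismatch between the weights $3^{-n}$ in \eqref{gsgdf} and $2^{-|i|}$ in \eqref{mnvc}, and on the finiteness of the values via Proposition \ref{erfdy}, only make explicit points the paper leaves implicit.
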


 It is well-known that any  perfect, compact, metrizable, zero-dimensional space  is homeomorphic to the middle third Cantor set (see \cite{Engelking}, 6.2.A(c)).  Hence, suppose that $X$ is a perfect, compact, metrizable, zero-dimensional space and let $\psi:X \rightarrow \textit{\textbf{C}}$ be an homeomorphism. Consider the metric on $X$ given by  $$d_{\psi}(x,y)=d(\psi(x),\psi(y)) \quad \text{ for }x,y\in X,$$ where $d$ is the metric given in \eqref{gsgdf}. 
It follows from Theorem \ref{bcbcbcb}   that:

\begin{corollary}\label{hd} The map
$\emph{mdim}_{M}:C^{0}(X,d_{\psi})\rightarrow \mathbb{R}$ is not continuous anywhere. Therefore, for any   perfect, compact, metric, zero-dimensional space $(X,d)$, the map $\emph{mdim}_{M}:C^{0}(X,d)\rightarrow \mathbb{R}$ is not continuous anywhere. 
\end{corollary}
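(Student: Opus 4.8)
The plan is to transport the nowhere continuity established for the Cantor set in Theorem \ref{bcbcbcb} to $(X,d_{\psi})$ through the homeomorphism $\psi$. The essential point, and the reason the metric $d_{\psi}$ is introduced in the first place, is that the metric mean dimension is \emph{not} a topological conjugacy invariant (Example \ref{ghndand}); hence $\psi$ cannot be used merely as a homeomorphism. Endowing $X$ with the pulled-back metric $d_{\psi}(x,y)=d(\psi(x),\psi(y))$ upgrades $\psi$ into an \emph{isometry} $\psi:(X,d_{\psi})\to(\textit{\textbf{C}},d)$, and isometric conjugacies do preserve the metric mean dimension.

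Concretely, I would introduce the conjugation operator
\[
\Psi:C^{0}(X,d_{\psi})\to C^{0}(\textit{\textbf{C}},d),\qquad \Psi(\phi)=\psi\circ\phi\circ\psi^{-1},
\]
and first check that it is an isometry for the sup-metrics of \eqref{cbenfn}: writing $x=\psi^{-1}(z)$ and using $d(\psi(a),\psi(b))=d_{\psi}(a,b)$ gives $\max_{z\in\textit{\textbf{C}}}d(\Psi(\phi)(z),\Psi(\varphi)(z))=\max_{x\in X}d_{\psi}(\phi(x),\varphi(x))$. Thus $\Psi$ is a bijective isometry, with inverse $\varphi\mapsto\psi^{-1}\varphi\psi$, hence a homeomorphism of function spaces. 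Since $\psi\circ\phi=\Psi(\phi)\circ\psi$ exhibits $\psi$ as an isometric conjugacy between $(X,\phi,d_{\psi})$ and $(\textit{\textbf{C}},\Psi(\phi),d)$, invariance under isometric conjugacy yields $\underline{\text{mdim}_{M}}(X,\phi,d_{\psi})=\underline{\text{mdim}_{M}}(\textit{\textbf{C}},\Psi(\phi),d)$, and likewise for the upper version. Therefore $\text{mdim}_{M}(X,\cdot,d_{\psi})=\text{mdim}_{M}(\textit{\textbf{C}},\cdot,d)\circ\Psi$, and because $\Psi$ is a homeomorphism this composite is continuous at $\phi$ if and only if $\text{mdim}_{M}(\textit{\textbf{C}},\cdot,d)$ is continuous at $\Psi(\phi)$. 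As the latter is nowhere continuous by Theorem \ref{bcbcbcb}, the former is nowhere continuous, which proves the first assertion.

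To deduce the statement for an arbitrary perfect, compact, metric, zero-dimensional space $(X,d)$, I would invoke the classification result \cite{Engelking}, 6.2.A(c) to obtain a homeomorphism $\psi:X\to\textit{\textbf{C}}$ and form $d_{\psi}$ as above. Since $d_{\psi}$ is the pullback of a metric under the homeomorphism $\psi$, it is topologically equivalent to $d$, so by the observation preceding Theorem \ref{bcbcbcb1} the topology of $C^{0}(X)$ — and hence the notion of continuity of the metric mean dimension map on it — is unchanged when $d$ is replaced by $d_{\psi}$. The first part then applies verbatim.

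The main obstacle is conceptual rather than computational: because $\text{mdim}_{M}$ genuinely depends on the metric, one cannot simply push the result forward along $\psi$ as a topological conjugacy, and the whole argument hinges on first promoting $\psi$ to an isometry via $d_{\psi}$ and then keeping careful track of \emph{which} metric on $X$ is in force at each step (sup-metric on the domain versus the metric used to compute the dimension). Once the isometric identification $\Psi$ is set up correctly, the transfer of nowhere continuity and the reduction to equivalent metrics are routine.
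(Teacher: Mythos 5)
Your treatment of the first assertion is correct and is exactly the argument the paper leaves implicit: the paper's ``proof'' of this corollary is the single sentence ``It follows from Theorem \ref{bcbcbcb}'', and your conjugation operator $\Psi(\phi)=\psi\circ\phi\circ\psi^{-1}$, checked to be an isometry for the sup-metrics and to implement an isometric conjugacy between $(X,\phi,d_{\psi})$ and $(\textit{\textbf{C}},\Psi(\phi),d)$, is the right way to fill that in. Your emphasis that $\psi$ must first be promoted to an isometry via the pulled-back metric, rather than used as a mere topological conjugacy, is also the correct reading of why $d_{\psi}$ is introduced at all.

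The second assertion is where both your argument and the paper's have a genuine gap. The observation preceding Theorem \ref{bcbcbcb1} only says that replacing $d$ by an equivalent metric leaves the topology of the \emph{domain} $C^{0}(X)$ unchanged; it says nothing about the \emph{target function}, and $\phi\mapsto\text{mdim}_{M}(X,\phi,d)$ and $\phi\mapsto\text{mdim}_{M}(X,\phi,d_{\psi})$ are genuinely different functions when $d$ and $d_{\psi}$ are only topologically equivalent --- that is precisely the content of Example \ref{ghndand}. Hence nowhere continuity of $\text{mdim}_{M}(X,\cdot,d_{\psi})$ does not ``apply verbatim'' to $\text{mdim}_{M}(X,\cdot,d)$ for an arbitrary compatible metric $d$. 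To establish the general statement one would need to run the perturbation scheme of Theorem \ref{bcbcbcb1} and Example \ref{gshfkf} intrinsically in $(X,d)$, and in particular exhibit, for \emph{every} compatible metric $d$, continuous maps with positive $\text{mdim}_{M}(X,\cdot,d)$ arbitrarily close to any given map; this is not automatic, since by Proposition \ref{erfdy} the metric mean dimension with respect to $d$ is bounded by the box dimension of $(X,d)$, which depends on $d$ and may be small. This gap is inherited from the paper rather than introduced by you, but as written the passage from the $d_{\psi}$ case to an arbitrary compatible metric is not justified.
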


Proposition \ref{erfdy}  says for any   $\phi:X\rightarrow X$  we have 
$\overline{\text{mdim}_{M}}(X,\phi,d) \leq \overline{\text{dim}_{B}} (X,d) $  and $  \underline{\text{mdim}_{M}}(X,\phi,d) \leq \underline{\text{dim}_{B}} (X,d) .$   However, there exist compact metric spaces $X$ with positive box dimension such that any continuous map on $X$ has zero metric mean dimension, as we will see in the next example.

\begin{example}  Let  $A = \{0\} \cup \{1/n: n\geq 1\}$ endowed with the metric $d(x,y)=|x-y|$ for $x,y\in A$.  In \cite{Kawabata}, Lemma 3.1, is proved that   $\underline{\text{dim}_{B}}( A ) = 1/2.$  However, for any    countable, compact metric space $X$  and   any  continuous map $\phi  :
X \rightarrow X $ we have $h_{top}(\phi ) = 0$ (see \cite{Bobok}, Proposition 5.1). Consequently, $\text{mdim}_{M}(X,\phi,{d} ) = 0$ for any continuous map  $\phi$ on $X$.  
\end{example}

Note that the set $A$ in the above example is compact, countable   zero topological dimensional. An example such that the space $X$ is   compact, uncountable and  zero topological dimensional and  such that any continuous map on $X$ has zero metric mean dimension can be seen in  \cite{Bobok}, Proposition 5.4.

\medskip

Next, we will consider the cases $X=[0,1]$ and  $X=\mathbb{S}^{1}$ (the unitary circle) endowed with the metric inherited from the line. In \cite{Yano}, Corollary 1.1, Yano   proved that $h_{top}: C^{0}(X)\rightarrow \mathbb{R}\cup \{\infty\}$ is continuous on any map with infinite topological entropy. We could expect that $\text{mdim}_{M}$ to be continuous in maps with   metric mean dimension equal to 1. 
However, the following example will show the opposite.  

\begin{example}\label{kegdjfjfn}
Consider the continuous map $\psi$ constructed on Example \ref{exfagner}, which has metric mean dimension equal to 1. 
For each $n\geq 1,$ take $m_{n}=n$ and
$$
\psi_n(x)= \begin{cases}
    \psi(x), &  \text{ if }x\in[0,a_{n+1}], \\
    x, & \hbox{ if }x\in [a_{n+1},1].
      \end{cases}
$$
Thus $\psi_n$ converges uniformly to $\psi$ as $n\rightarrow \infty$.    Note that $ \text{mdim}_M([0,1],\psi_{n},|\cdot |)= 0$ for any $n\geq1$. Therefore,  $\text{mdim}_M$ is not continuous on $\psi$. \end{example}

\begin{theorem}\label{ntmtmgm}
The map $\emph{mdim}_{M}:C^{0}([0,1])\rightarrow \mathbb{R}$ is not continuous anywhere. 
\end{theorem}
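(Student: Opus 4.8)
The plan is to prove that in every uniform neighborhood of an arbitrary $\phi\in C^{0}([0,1])$ there lie both a map of metric mean dimension $0$ and a map of metric mean dimension $1$. Once this is shown, discontinuity at $\phi$ is immediate: if $\text{mdim}_{M}([0,1],\cdot,|\cdot|)$ were continuous at $\phi$ with value $v=\text{mdim}_{M}([0,1],\phi,|\cdot|)$, then a sequence of zero-mdim maps converging to $\phi$ would force $v=0$, while a sequence of mdim-$1$ maps converging to $\phi$ would force $v=1$, a contradiction. As $\phi$ is arbitrary, $\text{mdim}_{M}$ would then be continuous nowhere. This is the interval analogue of the dichotomy carried out in Theorem \ref{bcbcbcb1}, with the two tails of that proof replaced by two density statements.

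First I would establish that maps of zero metric mean dimension are dense. Polygonal (piecewise linear) self-maps of $[0,1]$ are dense in $C^{0}([0,1])$ for the metric \eqref{cbenfn}: interpolating $\phi$ linearly on a fine grid gives a piecewise linear map whose values stay in $[0,1]$ because $\phi$ maps into $[0,1]$, and this approximation is uniformly close to $\phi$. Moreover, any continuous piecewise monotone interval map with $\ell$ laps satisfies $h_{top}\le \log\ell<\infty$ (the number of laps of the $n$-th iterate is at most $\ell^{n}$), so by Remark \ref{ueurur} every such map has zero metric mean dimension. Hence zero-mdim maps are dense.

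The substantial step is the density of maps with metric mean dimension $1$, which will also yield Corollary \ref{hdhdhfhf}. Given $\phi$ and $\varepsilon>0$, I would use the intermediate value theorem applied to $x\mapsto\phi(x)-x$ to fix a point $p$ with $\phi(p)=p$, and then choose a short interval $I\ni p$ (one-sided when $p\in\{0,1\}$) so small that $\phi(I)$ and $I$ both lie within $\varepsilon$ of $p$. On $I$ I would install an affine rescaled copy of the map $\psi$ of Example \ref{exfagner}; since $0$ and $1$ are fixed points of $\psi$, this copy fixes the endpoints of $I$ and maps $I$ \emph{onto} itself, so one can interpolate linearly on a thin collar to glue it continuously to $\phi$, leaving $\phi$ unchanged outside the collar. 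All altered values stay within $\varepsilon$ of $p$, so the resulting $\tilde\phi$ satisfies $\|\tilde\phi-\phi\|_{\infty}<\varepsilon$ after a harmless rescaling of $\varepsilon$. Because $I$ is $\tilde\phi$-invariant and $\tilde\phi|_{I}$ is obtained from $\psi$ by a similarity, two observations finish the computation: metric mean dimension is unaffected when the metric is multiplied by a positive constant (as $|\log\varepsilon|\sim|\log(\varepsilon/\lambda)|$ as $\varepsilon\to0$), and it does not decrease on passing to a closed invariant subsystem (a $(n,\tilde\phi|_{I},\varepsilon)$-separated subset of $I$ is $(n,\tilde\phi,\varepsilon)$-separated in $[0,1]$). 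Thus $\text{mdim}_{M}([0,1],\tilde\phi,|\cdot|)\ge\text{mdim}_{M}(I,\tilde\phi|_{I},|\cdot|)=\text{mdim}_{M}([0,1],\psi,|\cdot|)=1$, while Proposition \ref{erfdy} gives the reverse inequality, so $\tilde\phi$ has metric mean dimension exactly $1$.

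The main obstacle is exactly this mdim-$1$ density: one must compress the infinite tower of horseshoes of Example \ref{exfagner} into an arbitrarily small interval about a fixed point while simultaneously keeping the perturbation continuous, uniformly small, and $[0,1]$-valued, and while verifying that shrinking the interval does not destroy the value $1$. This last point is the scale-invariance of metric mean dimension under similarities, and it is the estimate I would check most carefully; the fixed-point localization, the collar interpolation, and the subsystem monotonicity are then routine, as is the final contradiction argument that forces discontinuity at every $\phi$.
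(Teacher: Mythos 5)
Your proposal is correct, and it takes a genuinely different (and in one half, simpler) route than the paper. The paper argues by cases on the value $v=\text{mdim}_{M}([0,1],\phi,|\cdot|)$: when $v<1$ it installs an affinely rescaled copy of the map $\psi$ of Example \ref{exfagner} on a small interval at a fixed point of $\phi$, glued by a linear collar --- exactly your mdim-$1$ density construction, including the scale-invariance of $\text{mdim}_M$ under the affine conjugacy and the invariant-subsystem monotonicity; when $v=1$ it must instead produce nearby maps of \emph{smaller} metric mean dimension, and it does so by an iterative surgery (replacing $\phi$ near finitely many accumulation points of periodic points by localized copies of $\psi$, then truncating as in Example \ref{kegdjfjfn}) --- by far the most delicate and least detailed part of the paper's proof. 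You avoid that case analysis entirely: your observation that piecewise linear maps are dense, have at most $\ell^{n}$ laps under iteration, hence finite topological entropy and (by Remark \ref{ueurur}) zero metric mean dimension, gives the mdim-$0$ density for \emph{every} $\phi$ at once, and then the two dense families force discontinuity at every point by the $0\neq 1$ contradiction. What each approach buys: yours is shorter, self-contained modulo the standard Misiurewicz--Szlenk lap bound, and sidesteps the sketchy iterative construction; the paper's case-$2$ surgery produces zero-mdim approximants that coincide with $\phi$ outside small intervals (a more surgical statement it reuses in Proposition \ref{hfjdjehr}), though your piecewise linear approximants would serve that purpose as well. Both proofs share the key ingredient, namely the localization of Example \ref{exfagner} near a fixed point, which is also what yields Corollary \ref{hdhdhfhf}.
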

\begin{proof}
Let $\phi \in C^{0}([0,1])$. First, we will suppose that $\text{mdim}_{M}([0,1],\phi, |\cdot|)<1.$   
Choose  a fixed point  $x_{0}$ of $\phi$. First, suppose that $x_{0}<1$. Fix $\varepsilon >0$. There exists $\delta\in (0, \min\{\varepsilon/2,1-x_{0}\})$ such that if $|x-x_{0}|<\delta$ then $|\phi(x)-x_{0}|<\varepsilon/2$.  Let $T:[x_{0},x_{0}+\delta/2]\rightarrow [0,1]$ be the unique increasing affine map from $[x_{0},x_{0}+\delta/2]$ onto $[0,1]$.  Define $\varphi$ on $[x_{0},x_{0}+\delta/2]$ by $\varphi= T^{-1}\psi T $,  where $\psi$ is the continuous map of the Example \ref{exfagner}.  Note that $\varphi(x_{0})=x_{0}$ and $\varphi (x_{0}+\delta/2)=x_{0}+\delta/2$. Define $\varphi$ linearly on $[x_{0}+\delta/2,x_{0}+\delta]$ onto $[x_{0}+\delta/2, \phi(x_{0}+\delta)]$ (or onto $[ \phi(x_{0}+\delta),x_{0}+\delta/2]$ if $\phi$ is decreasing on $[x_{0}+\delta/2,x_{0}+\delta]$) and take $\varphi(x)=\phi(x)$ for any $x\in [0,1]\setminus [x_{0},x_{0}+\delta]$. If  $x\in   [x_{0},x_{0}+\delta]$, then  $$  |\phi(x)-\varphi(x)| \leq |\phi(x)-x_{0}|+|x_{0}-\varphi(x)|<\varepsilon .$$ Consequently, $d(\phi,\varphi)<\varepsilon.$  Furthermore, $$ \text{mdim}_{M}([0,1], \varphi,|\cdot|)=   \text{mdim}_{M}([0,1], \psi,|\cdot|) = 1  >\text{mdim}_{M}([0,1], \phi,|\cdot|), $$
which proves the theorem for $x_{0}\neq 1$. If $x_{0}=1$, we can make the construction above on the interval $[x_{0}-\delta/2,x_{0}]$. 

Now, if   $\text{mdim}_M([0,1],\phi,|\cdot |)= 1$, 
we have $h_{top}(\phi)=\infty$ and hence the set consisting of periodic points of $\phi$ is infinite. Take a periodic point $x_{1}$ of $\phi$, being an accumulation point of the set consisting of periodic points of $\phi$.   Let $\varepsilon >0$. Choose $\delta_{1}>0$ such that $d(\phi(x),x_{1})<\varepsilon/4$ for any $x\in I_{1}= [x_{1}-\delta_{1},x_{1}+\delta_{1}]$. Let $\tilde{x}_{1}\neq x_{1} $ other periodic point of $\phi$ in $[x_{1}-\delta_{1}/2,x_{1}+\delta_{1}/2]$. Let $\phi_{1}$ be the continuous map defined for $x$ in the interval $[\tilde{x}_{1},x_{1}]$ (or on $[x_{1},\tilde{x}_{1}]$ if $x_{1}<\tilde{x}_{1}$) as $\phi_{1}(x)=T^{-1}\psi T(x)$, where $T:[\tilde{x}_{1},x_{1}]\rightarrow [0,1]$ is the unique  affine map from $[\tilde{x}_{1},x_{1}]$ onto $[0,1]$ (or  $T:[x_{1},\tilde{x}_{1}]\rightarrow [0,1]$ is the unique  affine map from $[x_{1},\tilde{x}_{1}]$ onto $[0,1]$   if $x_{1}<\tilde{x}_{1}$), $\phi_{1}(x)=\phi(x)$ if $x\in [0,1]\setminus I_{1}$, and to extend it  continuously and linearly on $  I_{1}\setminus [\tilde{x}_{1},x_{1}]$ (or on $  I_{1}\setminus [x_{1},\tilde{x}_{1}]$ if $x_{1}<\tilde{x}_{1}$). Note that $d(\phi,\phi_{2})<\varepsilon/2$.  If $\text{mdim}_{M}([0,1],\phi|_{[0,1]\setminus I_{1}},d)=1$, as above we can to construct a perturbation $\phi_{2}$ of   $\phi_{1}$ on a neighborhood $I_{2}$ of other periodic  point which is an accumulation point of the set consisting of periodic points of $\phi$, such that $d(\phi_{1},\phi_{2})<\varepsilon/4$. Following this process, we can to construct a sequence of perturbations $\phi_{n}$ of $\phi_{n-1}$ on an interval $I_{n}$, such that $d(\phi_{n},\phi_{n-1})<\varepsilon/2^{n}$ until that $\text{mdim}_{M}([0,1],\phi|_{[0,1]\setminus \bigcup_{i=1}^{n}I_{i}},|\cdot |)=0$. Then $d(\phi_{n},\phi)<\varepsilon.$ In the same way as   in Example \ref{kegdjfjfn}, we can to construct a sequence of continuous maps $\psi_{k}$, converging to $\phi_{n}$ as $k\rightarrow \infty$,  with zero metric mean dimension on $\bigcup_{i=1}^{n}I_{i}$ and such that coincide with $\phi$ on $[0,1]\setminus \bigcup_{i=1}^{n}I_{i}$ for any $k\geq 1.$ Therefore  $\text{mdim}_{M}([0,1],\psi_{k},|\cdot |)=0$ for any $k\geq 1$ and $d(\psi_{k},\phi)<\varepsilon$ for a large enough $k$, which proves the theorem.
\end{proof}

A  consequence of Theorem \ref{ntmtmgm} is the following corollary.

\begin{corollary}\label{nbvbvvnbrr} 
The map $\emph{mdim}_{M}:C^{0}(\mathbb{S}^{1})\rightarrow \mathbb{R}$ is not continuous anywhere. 
\end{corollary}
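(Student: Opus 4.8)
Corollary \ref{nbvbvvnbrr} asserts that $\mathrm{mdim}_M \colon C^0(\mathbb{S}^1) \to \mathbb{R}$ is nowhere continuous, and the natural approach is to deduce it from Theorem \ref{ntmtmgm} by transporting the interval construction onto the circle. The plan is to fix an arbitrary $\phi \in C^0(\mathbb{S}^1)$ and produce, for every $\varepsilon > 0$, a map $\varphi$ with $d(\phi,\varphi) < \varepsilon$ whose metric mean dimension differs from that of $\phi$; since $\overline{\mathrm{dim}_B}(\mathbb{S}^1) = 1$, Proposition \ref{erfdy} gives the bound $\mathrm{mdim}_M(\mathbb{S}^1,\psi,d) \le 1$ for all $\psi$, so the two regimes $\mathrm{mdim}_M(\mathbb{S}^1,\phi,d) < 1$ and $\mathrm{mdim}_M(\mathbb{S}^1,\phi,d) = 1$ can be handled exactly as in the proof of Theorem \ref{ntmtmgm}.

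First I would treat the case $\mathrm{mdim}_M(\mathbb{S}^1,\phi,d) < 1$. The key local fact is that a continuous self-map of the circle need not have a fixed point (rotations do not), so I cannot simply repeat the interval argument verbatim. Instead I would choose a point $p \in \mathbb{S}^1$ and a short closed arc $A = [p, p+\delta]$ (identified isometrically with a subinterval of $\mathbb{R}$) small enough that $\phi(A)$ is contained in an arc of diameter $< \varepsilon/2$; such a $\delta$ exists by uniform continuity of $\phi$. On a subarc $A' = [p, p+\delta/2]$ I would insert a rescaled copy $T^{-1}\psi T$ of the map $\psi$ from Example \ref{exfagner}, where $T$ affinely identifies $A'$ with $[0,1]$, and interpolate linearly on $A \setminus A'$ so that $\varphi$ agrees with $\phi$ outside $A$ and remains continuous on the circle. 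Exactly as in Theorem \ref{ntmtmgm}, the perturbation is $\varepsilon$-small because both $\phi$ and $\varphi$ map $A$ into an $\varepsilon/2$-arc around $\phi(p)$, while the embedded horseshoe structure of $\psi$ forces $\mathrm{mdim}_M(\mathbb{S}^1,\varphi,d) = 1 > \mathrm{mdim}_M(\mathbb{S}^1,\phi,d)$. Since metric mean dimension is local to the invariant piece carrying the complexity, restricting $\psi$ to a subarc that is mapped onto itself preserves its value $1$.

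For the case $\mathrm{mdim}_M(\mathbb{S}^1,\phi,d) = 1$ I would mirror the second half of the proof of Theorem \ref{ntmtmgm}: here $h_{top}(\phi) = \infty$ by Remark \ref{ueurur}, so $\phi$ has infinitely many periodic points, and I can successively flatten the map on a finite collection of arcs $I_1,\dots,I_n$ around periodic points that are accumulation points of the periodic set, lowering the metric mean dimension to $0$ on $\bigcup_i I_i$ while keeping the total perturbation below $\varepsilon$ via a $\sum \varepsilon/2^i$ estimate; then the approximating maps $\psi_k$ obtained as in Example \ref{kegdjfjfn} have zero metric mean dimension and converge uniformly to $\phi$. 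The main obstacle is the absence of fixed points on $\mathbb{S}^1$, which is why I anchor the construction at periodic points in the high-dimension case and at an arbitrary arc (not a fixed point) in the low-dimension case; once the local insertion is set up correctly, the estimates are identical to those of Theorem \ref{ntmtmgm}. A cleaner alternative, which I would mention, is to observe that the constructions in Theorem \ref{ntmtmgm} take place on arcs strictly interior to $[0,1]$ and leave neighborhoods of the endpoints untouched, so they descend directly to any circle obtained by gluing the endpoints, giving the corollary with essentially no new work.
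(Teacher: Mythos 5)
Your plan for the case $\mathrm{mdim}_M(\mathbb{S}^{1},\phi,d)=1$ matches the paper (reduce to the second half of the proof of Theorem \ref{ntmtmgm}), but the case $\mathrm{mdim}_M(\mathbb{S}^{1},\phi,d)<1$ contains a genuine gap. You correctly identify the obstacle --- a circle map need not have a fixed point --- but your workaround does not remove it. You anchor the insertion at an \emph{arbitrary} point $p$: on the subarc $A'=[p,p+\delta/2]$ you replace $\phi$ by $T^{-1}\psi T$, which maps $A'$ into itself, so $\varphi(x)$ lies within $\delta$ of $p$ for every $x\in A'$. Meanwhile $\phi(x)$ lies within $\varepsilon/2$ of $\phi(p)$. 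Your claim that ``both $\phi$ and $\varphi$ map $A$ into an $\varepsilon/2$-arc around $\phi(p)$'' is therefore false for $\varphi$ unless $\phi(p)$ is already close to $p$. For $\phi$ a rotation by $\pi$, say, one gets $d(\phi,\varphi)\approx d(p,\phi(p))$, the diameter of the circle, no matter how small $\delta$ is. The smallness estimate $|\phi(x)-\varphi(x)|\le|\phi(x)-x_0|+|x_0-\varphi(x)|$ in Theorem \ref{ntmtmgm} uses in an essential way that $x_0$ is a fixed point, so that \emph{both} terms are controlled.

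The paper closes this gap differently: it first uses the fact that maps with a periodic point are dense in $C^{0}(\mathbb{S}^{1})$ to find $\varphi_1$ with a periodic point and $d(\phi,\varphi_1)<\varepsilon/2$, and then runs the insertion of Theorem \ref{ntmtmgm} anchored at that periodic point (which plays the role of the fixed point, making the local perturbation small). Your ``cleaner alternative'' --- gluing the endpoints of $[0,1]$ --- also does not rescue the argument, since an arbitrary $\phi\in C^{0}(\mathbb{S}^{1})$ (again, a rotation) need not descend from a continuous self-map of $[0,1]$, so this only treats a special subclass of circle maps rather than proving discontinuity at every point of $C^{0}(\mathbb{S}^{1})$. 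To repair your proof, insert the intermediate approximation by a map with a periodic point before performing the local surgery.
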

\begin{proof}
Fix $\phi\in C^{0}(\mathbb{S}^{1})$ with metric mean dimension less than 1  and take $\varepsilon>0$. Since the set consisting of map with some periodic point is dense on $C^{0}(\mathbb{S}^{1})$, we can choose any continuous map $\varphi_{1}$ on $\mathbb{S}^{1}$ with a periodic point and such that $d(\phi,\varphi_{1})<\varepsilon/2$. We can modify the first argument of the proof of Theorem \ref{ntmtmgm}, with a periodic point of $\varphi_{1}$ replacing the role of the fixed point of $\phi$. Hence, we can construct a continuous map $\varphi$ on $\mathbb{S}^{1}$ with metric mean dimension equal to 1 and such that $d(\phi,\varphi)<\varepsilon,$ which proves the corollary.  

If the metric mean dimension of $\phi$ is equal to 1,   
we can argue as in the second part of the proof of the above theorem. 
\end{proof}

 Now,     Yano in \cite{Yano} proved that the set consisting of continuous maps with infinite topological entropy defined  on any  manifold  is residual. In particular, the set consisting of continuous map with infinite topological entropy defined  on the interval and on the circle is residual.   A corollary of the proof of the above results is:
 
 \begin{corollary}\label{hdhdhfhf} If $X= [0,1]$ or $\mathbb{S}^{1}$, then 
 the set $\{\phi\in C^{0}(X):  \emph{mdim}_{M}(X,\phi,|\cdot |)=1\}$ is   dense  in $C^{0}(X)$.
 \end{corollary}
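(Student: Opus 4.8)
The plan is to obtain the density of the set $\{\phi\in C^{0}(X): \text{mdim}_{M}(X,\phi,|\cdot|)=1\}$ as a direct by-product of the perturbation arguments already carried out in Theorem~\ref{ntmtmgm} and Corollary~\ref{nbvbvvnbrr}. Recall that, in order to prove non-continuity at a point $\phi$ with $\text{mdim}_{M}(X,\phi,|\cdot|)<1$, those proofs produced, for every $\varepsilon>0$, a continuous map $\varphi$ with $d(\phi,\varphi)<\varepsilon$ and $\text{mdim}_{M}(X,\varphi,|\cdot|)=1$. So for maps of metric mean dimension strictly less than $1$ the required approximating map is literally the map constructed there.

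First I would treat the case $X=[0,1]$. Fix an arbitrary $\phi\in C^{0}([0,1])$ and $\varepsilon>0$. If $\text{mdim}_{M}([0,1],\phi,|\cdot|)<1$, the first part of the proof of Theorem~\ref{ntmtmgm} furnishes a map $\varphi$ with $d(\phi,\varphi)<\varepsilon$ and $\text{mdim}_{M}([0,1],\varphi,|\cdot|)=1$ (choosing a fixed point $x_{0}$ of $\phi$ and grafting a rescaled copy of the map $\psi$ of Example~\ref{exfagner} onto a tiny interval $[x_{0},x_{0}+\delta/2]$). If instead $\text{mdim}_{M}([0,1],\phi,|\cdot|)=1$, then $\phi$ itself already lies in the set, so we may take $\varphi=\phi$; there is nothing to approximate. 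In either case we have produced an element of the set within distance $\varepsilon$ of $\phi$, which gives density in $C^{0}([0,1])$.

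Next I would handle $X=\mathbb{S}^{1}$ in the same spirit, importing the construction of Corollary~\ref{nbvbvvnbrr}. For $\phi\in C^{0}(\mathbb{S}^{1})$ with metric mean dimension less than $1$ and $\varepsilon>0$, the proof of that corollary first approximates $\phi$ to within $\varepsilon/2$ by a map $\varphi_{1}$ possessing a periodic point (using density of such maps in $C^{0}(\mathbb{S}^{1})$), then grafts a rescaled copy of $\psi$ near that periodic point to produce a map $\varphi$ with $d(\phi,\varphi)<\varepsilon$ and $\text{mdim}_{M}(\mathbb{S}^{1},\varphi,|\cdot|)=1$. If $\phi$ already has metric mean dimension $1$ we again take $\varphi=\phi$. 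This establishes density on $\mathbb{S}^{1}$ and completes the proof.

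The content here is genuinely light: no new construction is needed, and the only thing to verify is that the approximating maps exhibited in the earlier proofs indeed have metric mean dimension exactly equal to $1$ (not merely positive), which they do since each is built by transplanting the maximal-dimension map $\psi$ of Example~\ref{exfagner}. Thus the main (and only mild) obstacle is purely expository, namely to phrase the statement so that it transparently reads off the already-constructed perturbations rather than re-deriving them; I would state it as a corollary whose proof simply points to the relevant lines of Theorem~\ref{ntmtmgm} and Corollary~\ref{nbvbvvnbrr}.
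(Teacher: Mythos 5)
Your proposal is correct and is exactly the argument the paper intends: the paper states this corollary follows from the proofs of Theorem \ref{ntmtmgm} and Corollary \ref{nbvbvvnbrr}, and you read off density precisely from the perturbations constructed there (a nearby map with metric mean dimension $1$ when $\text{mdim}_{M}(X,\phi,|\cdot|)<1$, and $\phi$ itself otherwise). No gap; same approach as the paper.
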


 \section{On the semi-continuity of the metric mean dimension}

 A real valued function $\varphi : X \rightarrow \mathbb{R}\cup \{\infty\}$ is called \textit{lower} (respectively \textit{upper})  \textit{semi-continuous on a point} $x\in X$ if $$\liminf_{y\rightarrow x}\varphi (y)\geq \varphi (x)\quad  (\text{repectively } \limsup_{y\rightarrow x}\varphi (y)\leq \varphi (x) ).    $$    $\varphi  $ is called \textit{lower} (respectively \textit{upper})  \textit{semi-continuous} if is  lower (respectively  {upper})  {semi-continuous on any point} of $ X$.

 \medskip
 
 Misiurewicz in \cite{Misiurewicz}, Corollary 1, proved that $h_{top}: C^{0}([0,1])\rightarrow \mathbb{R}\cup \{\infty\}$ is lower semi-continuous. For the case of the metric mean dimension we have:  
 
 \begin{proposition}\label{hfjdjehr} If $X=[0,1]$ or $\mathbb{S}^1$, then 
 $\emph{mdim}_{M}:C^{0}(X)\rightarrow \mathbb{R}$ is nor lower neither upper  semi-continuous. Furthermore,   
 $\emph{mdim}_{M}:C^{0}(X)\rightarrow \mathbb{R}$ is not  lower   semi-continuous on maps with metric mean dimension in $(0,1]$ and is not  upper   semi-continuous on maps with metric mean dimension in $[0,1)$. 
 \end{proposition}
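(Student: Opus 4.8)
The plan is to obtain the two halves of the statement directly from the perturbation schemes already built in the proofs of Theorem \ref{ntmtmgm} and Corollary \ref{nbvbvvnbrr}, reorganising them according to the value $c=\text{mdim}_M(X,\phi,|\cdot|)$. Recall that $\text{mdim}_M$ fails to be lower semi-continuous at $\phi$ as soon as there is a sequence $\phi_k\to\phi$ with $\liminf_k \text{mdim}_M(X,\phi_k,|\cdot|)<c$, and fails to be upper semi-continuous at $\phi$ as soon as there is a sequence $\phi_k\to\phi$ with $\limsup_k \text{mdim}_M(X,\phi_k,|\cdot|)>c$. Thus it suffices, at each $\phi$ in the two prescribed ranges, to exhibit nearby maps whose metric mean dimension is respectively strictly smaller or strictly larger than $c$.

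For the failure of upper semi-continuity on maps with $c\in[0,1)$, I would argue as follows. If $X=[0,1]$, every $\phi\in C^0([0,1])$ has a fixed point, so the first part of the proof of Theorem \ref{ntmtmgm} applies: for each $k\geq 1$ it produces $\varphi_k$ with $d(\phi,\varphi_k)<1/k$ and $\text{mdim}_M([0,1],\varphi_k,|\cdot|)=1$. If $X=\mathbb{S}^1$, I would instead invoke the first part of the proof of Corollary \ref{nbvbvvnbrr}: since maps possessing a periodic point are dense in $C^0(\mathbb{S}^1)$, one first approximates $\phi$ by a map with a periodic point and then runs the same construction around that point, again obtaining $\varphi_k\to\phi$ with $\text{mdim}_M(\mathbb{S}^1,\varphi_k,|\cdot|)=1$. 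In either case $\limsup_k \text{mdim}_M(X,\varphi_k,|\cdot|)=1>c$, so upper semi-continuity fails at $\phi$.

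For the failure of lower semi-continuity on maps with $c\in(0,1]$, the key observation is that $c>0$ forces $h_{top}(\phi)=\infty$: by Remark \ref{ueurur} a finite topological entropy would give $\text{mdim}_M=0$, contradicting $c>0$ (this is also contained in the chain $\text{mdim}_M<h_{top}$). Consequently $\phi$ has infinitely many periodic points and, as used in the second parts of the proofs of Theorem \ref{ntmtmgm} and Corollary \ref{nbvbvvnbrr}, periodic points that accumulate on the periodic set. The flattening procedure of those proofs — reducing the metric mean dimension to $0$ on finitely many intervals $I_1,\dots,I_n$ around such accumulation points and then applying the scheme of Example \ref{kegdjfjfn} on $\bigcup_i I_i$ — then yields a sequence $\psi_k\to\phi$ with $\text{mdim}_M(X,\psi_k,|\cdot|)=0$ for all $k$. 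Hence $\liminf_k \text{mdim}_M(X,\psi_k,|\cdot|)=0<c$ and lower semi-continuity fails at $\phi$.

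The main obstacle is that the flattening construction is stated in Theorem \ref{ntmtmgm} only for maps with $c=1$, whereas here it must be run for every $c\in(0,1)$; the point to check is that this construction uses nothing beyond $h_{top}(\phi)=\infty$ (to supply the accumulation periodic points and the horseshoes to be destroyed), so it applies verbatim throughout the range $(0,1]$, producing maps of zero metric mean dimension. Finally, the first assertion of the proposition is immediate from the two halves: the map $\psi$ of Example \ref{exfagner} has $\text{mdim}_M=1\in(0,1]$, so lower semi-continuity fails there, while any map of finite topological entropy has $\text{mdim}_M=0\in[0,1)$, so upper semi-continuity fails there; hence $\text{mdim}_M$ is neither lower nor upper semi-continuous.
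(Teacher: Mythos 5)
Your proposal is correct and follows essentially the same route as the paper: the paper's own proof likewise reduces everything to the two perturbation schemes in the proof of Theorem \ref{ntmtmgm} (enlarging the metric mean dimension to $1$ near a fixed or periodic point, and flattening it to $0$ near accumulation points of the periodic set), observing that the second scheme only needs $h_{top}(\phi)=\infty$, which holds whenever $\text{mdim}_M(X,\phi,|\cdot|)>0$. Your write-up is in fact somewhat more explicit than the paper's about the semi-continuity bookkeeping and about the circle case via Corollary \ref{nbvbvvnbrr}, but there is no substantive difference in method.
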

 \begin{proof}
 In the second part of the proof of Theorem \ref{ntmtmgm}  we saw 
 that any continuous map on the interval with metric mean dimension equal to 1 can be approximated by a continuous map with zero metric mean dimension. Furthermore, we also proved that any continuous map with zero metric mean dimension can be approximated by a  continuous map with metric mean dimension equal to 1.  Using the same arguments of the proof of Theorem \ref{ntmtmgm} we can prove that any $\phi\in C^{0}([0,1])$ with metric mean dimension  in $(0,1)$ can be approximated by both a continuous map with metric mean dimension equal to 1 and a continuous map with metric mean dimension equal to 0.
 \end{proof}
 
  Next, Kolyada and Snoha in \cite{K-S}, Theorem F, showed that  $h_{top}:\mathcal{C}([0,1])\rightarrow \mathbb{R}\cup\{\infty\}$ is not lower semi-continuous, endowing  $\mathcal{C}([0,1])$   with the metric $$D((f_{i})_{i\in\mathbb{N}},(g_{i})_{i\in\mathbb{N}})=\sup_{i\in\mathbb{N}}\max_{x\in [0,1]}|f_{i}(x)-g_{i}(x)|.$$ Furthermore, they proved in Theorem G that $h_{top}:\mathcal{C}([0,1])\rightarrow \mathbb{R}\cup\{\infty\}$ is  lower semi-continuous on any constant sequence $(\phi, \phi,\dots)\in \mathcal{C}(X)$.
However, It follows from Proposition \ref{hfjdjehr} that:
 
 \begin{corollary} If $X=[0,1]$ or $\mathbb{S}^1$, then 
 $\emph{mdim}_{M}:\mathcal{C}(X)\rightarrow \mathbb{R}$ is nor lower neither upper  semi-continuous on any constant sequence $(\phi, \phi,\dots)\in \mathcal{C}(X)$. Consequently, $\emph{mdim}_{M}:\mathcal{C}(X)\rightarrow \mathbb{R}\cup\{\infty\}$ is nor lower neither upper  semi-continuous.  
 \end{corollary}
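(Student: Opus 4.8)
The plan is to lift the failure of semi-continuity recorded in Proposition \ref{hfjdjehr} from the space of single maps $C^{0}(X)$ to the space of non-autonomous systems $\mathcal{C}(X)$ by means of the constant-sequence embedding. Two elementary observations drive the argument. First, if $\textit{\textbf{f}}=(\phi,\phi,\dots)$ is a constant sequence, then $f_1^{(k)}=\phi^{k}$ for every $k\geq 0$, so the dynamical metrics $d_n$ associated to $\textit{\textbf{f}}$ coincide with the Bowen metrics of the single map $\phi$. Consequently $\text{sep}(n,\textit{\textbf{f}},\varepsilon)=\text{sep}(n,\phi,\varepsilon)$ for all $n$ and $\varepsilon$, whence
\begin{equation*}
\text{mdim}_{M}(X,(\phi,\phi,\dots),d)=\text{mdim}_{M}(X,\phi,d).
\end{equation*}
Second, the constant-sequence map $\iota:C^{0}(X)\to\mathcal{C}(X)$, $\phi\mapsto(\phi,\phi,\dots)$, is an isometry for the metric $d$ on $C^{0}(X)$ of \eqref{cbenfn} and the metric $D$ on $\mathcal{C}(X)$, since $D((\phi,\phi,\dots),(\varphi,\varphi,\dots))=\sup_{i\in\mathbb{N}}\max_{x\in X}d(\phi(x),\varphi(x))=\max_{x\in X}d(\phi(x),\varphi(x))=d(\phi,\varphi)$. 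In particular $\iota$ carries $C^{0}(X)$-convergence to $D$-convergence.

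With these facts in hand I would argue as follows. Fix a constant sequence $\textit{\textbf{f}}=(\phi,\phi,\dots)$ and suppose first that $\text{mdim}_{M}(X,\phi,d)\in(0,1]$. By Proposition \ref{hfjdjehr} (more precisely, by the construction in the proof of Theorem \ref{ntmtmgm}) there is a sequence $\phi_{n}\to\phi$ in $C^{0}(X)$ with $\text{mdim}_{M}(X,\phi_{n},d)=0$. The constant sequences $\textit{\textbf{f}}_{n}=(\phi_{n},\phi_{n},\dots)$ then satisfy $D(\textit{\textbf{f}}_{n},\textit{\textbf{f}})=d(\phi_{n},\phi)\to0$ while $\text{mdim}_{M}(X,\textit{\textbf{f}}_{n},d)=0$, so $\liminf_{n}\text{mdim}_{M}(X,\textit{\textbf{f}}_{n},d)=0<\text{mdim}_{M}(X,\textit{\textbf{f}},d)$; this is the failure of lower semi-continuity at $\textit{\textbf{f}}$. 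Symmetrically, if $\text{mdim}_{M}(X,\phi,d)\in[0,1)$, the same proposition furnishes $\phi_{n}\to\phi$ with $\text{mdim}_{M}(X,\phi_{n},d)=1$, and the constant sequences $(\phi_{n},\phi_{n},\dots)$ converge to $\textit{\textbf{f}}$ in $D$ while having metric mean dimension $1>\text{mdim}_{M}(X,\textit{\textbf{f}},d)$, which is the failure of upper semi-continuity at $\textit{\textbf{f}}$. Since $\text{mdim}_{M}(X,\phi,d)\in[0,1]$ by Proposition \ref{erfdy}, every constant sequence falls into at least one of the two ranges $(0,1]$ and $[0,1)$, so $\text{mdim}_{M}$ fails to be semi-continuous in the appropriate direction at each constant sequence; in particular it is neither lower nor upper semi-continuous on $\mathcal{C}(X)$ as a whole.

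I do not expect a genuine obstacle here: the entire content is contained in Proposition \ref{hfjdjehr}, and the corollary is a transfer result. The only points requiring care are the two reductions above --- that the metric mean dimension of a constant sequence equals that of the underlying single map, and that $\iota$ carries $C^{0}(X)$-convergence to $D$-convergence --- both of which are immediate from the definitions of $d_n$, of $D$, and of $\text{mdim}_{M}$. The conceptual value of the statement is the contrast with the Kolyada--Snoha result (\cite{K-S}, Theorem G) that $h_{top}$ \emph{is} lower semi-continuous on constant sequences: for the metric mean dimension even this weaker form of semi-continuity fails.
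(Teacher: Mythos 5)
Your proposal is correct and follows essentially the same route as the paper, which derives the corollary directly from Proposition \ref{hfjdjehr} without spelling out the transfer; you merely make explicit the two facts the paper leaves implicit (that a constant sequence has the same metric mean dimension as its underlying map, and that the constant-sequence embedding is isometric for $D$). Your reading of ``neither lower nor upper semi-continuous on any constant sequence'' as failure in the appropriate direction according to whether $\text{mdim}_{M}(X,\phi,d)$ lies in $(0,1]$ or $[0,1)$ is the correct interpretation and matches Proposition \ref{hfjdjehr}.
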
 
 
From now on, we will consider   $X=[0,1]$ or $\mathbb{S}^1$. The next example proves that  there exist non-autonomous dynamical systems on $X$ with infinite metric mean dimension. Consequently $\text{mdim}_{M}:\mathcal{C}(X)\rightarrow \mathbb{R}\cup \{\infty\}$ is unbounded. 

 \begin{example}
 Take $\textbf{\textit{f}}=(f_{i})_{i\in\mathbb{N}}$ on $X$ defined by $f_{i} =\psi^{2^{i}}$ for each $i\in\mathbb{N}$, where $\psi$ is the map from Example \ref{exfagner}. It is not difficult to prove that $\text{mdim}_{M}(X,\textbf{\textit{f}},|\cdot|)=\infty$ (see Example \ref{hfkenrkflr}). 
 \end{example}
 
 We finish this work with the next result:
 \begin{theorem}\label{bnfuefnf}
$\emph{mdim}_{M}:\mathcal{C}(X)\rightarrow \mathbb{R}\cup\{\infty\}$ is not lower semi-continuous on any non-autonomous dynamical system  with non-zero metric mean dimension.  
\end{theorem}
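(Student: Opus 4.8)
The plan is to show that $\textit{\textbf{f}}$, with $\text{mdim}_{M}(X,\textit{\textbf{f}}\,)=a>0$, is a $D$-limit of systems whose metric mean dimension vanishes; this already defeats lower semi-continuity at $\textit{\textbf{f}}$, since for any such sequence $\textit{\textbf{g}}^{(k)}\to \textit{\textbf{f}}$ one has $\liminf_{\textit{\textbf{g}}\to \textit{\textbf{f}}}\text{mdim}_{M}(X,\textit{\textbf{g}})\le \liminf_{k}\text{mdim}_{M}(X,\textit{\textbf{g}}^{(k)})=0<a$. First I would record the reduction that makes the target reachable: by Remark \ref{ueurur} a system of finite topological entropy has zero metric mean dimension, so it suffices to produce \emph{finite-entropy} approximants; dually, the hypothesis $a>0$ forces $h_{top}(\textit{\textbf{f}}\,)=\infty$, which means the complexity of $\textit{\textbf{f}}$ must be created at arbitrarily fine spatial scales rather than at any single coarse scale.

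Next I would build the approximants coordinate by coordinate, imitating the truncation of Example \ref{kegdjfjfn} and the local surgery in the proof of Theorem \ref{ntmtmgm}. Fix $k$. For each $n$, replace $f_{n}$ by a map $g_{n}^{(k)}$ obtained from $f_{n}$ by flattening it (making it affine, hence monotone) on the small regions carrying the part of its oscillation that occurs below the scale $1/k$, exactly as the map $\psi_{n}$ is obtained from the map $\psi$ of Example \ref{exfagner} by setting it equal to the identity on the short interval $[a_{n+1},1]$, where $\psi$ displaces points by less than the length of that interval. Since this surgery alters $f_{n}$ only on sets where $f_{n}$ moves points by at most $1/k$, it yields $\max_{x\in X}d(f_{n}(x),g_{n}^{(k)}(x))\le 1/k$ for every $n$, and therefore $D(\textit{\textbf{g}}^{(k)},\textit{\textbf{f}}\,)\le 1/k\to 0$.

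Finally I would estimate $\text{mdim}_{M}(X,\textit{\textbf{g}}^{(k)})$. The effect of the truncation is that each $g_{n}^{(k)}$ retains only finitely many laps coarser than $1/k$, with a bound depending on $k$ alone. One then controls the composition $g_{1}^{(k),(m)}$ and shows that the entropy at scale $\varepsilon$, that is $\lim_{m}\frac{1}{m}\log \text{sep}(m,\textit{\textbf{g}}^{(k)},\varepsilon)$, stays bounded as $\varepsilon\to 0$; consequently $\text{sep}(\textit{\textbf{g}}^{(k)},\varepsilon)/|\log \varepsilon|\to 0$ and $\text{mdim}_{M}(X,\textit{\textbf{g}}^{(k)})=0$. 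Equivalently, $h_{top}(\textit{\textbf{g}}^{(k)})<\infty$ and Remark \ref{ueurur} applies directly.

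The main obstacle is precisely this last estimate. In the single-map setting of Theorem \ref{ntmtmgm} and Proposition \ref{hfjdjehr} the truncated map keeps only finitely many horseshoes and so has finite entropy outright; in the non-autonomous setting one truncates infinitely many maps simultaneously and must bound the entropy of their \emph{composition} uniformly in $m$. The delicate task is to design the surgery so that it is at once uniformly $1/k$-small across all coordinates and strong enough to cap the per-step complexity of the composed system at a level independent of $m$ and $n$; this is the non-autonomous analogue of the ``finitely many surviving horseshoes'' mechanism. Care is genuinely required here, since complexity created at a fixed coarse scale by infinitely many different coordinates can be robust and hence resist a uniformly small perturbation, so the flattening must be calibrated to the scale $1/k$ at which it acts. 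Controlling this interaction between the fine-scale truncation and the composition is where the substance of the proof lies.
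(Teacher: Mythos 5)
Your reduction (finite entropy implies zero metric mean dimension by Remark \ref{ueurur}, so it suffices to exhibit finite-entropy approximants) is fine, but the argument stops exactly where it would have to start: the surgery is never constructed, and the mechanism you propose for it cannot work in the stated generality. ``Flattening the oscillation of $f_n$ occurring below scale $1/k$'' is modelled on Example \ref{exfagner} and Theorem \ref{ntmtmgm}, where the complexity is deliberately packed into shorter and shorter invariant intervals, so the offending oscillation really is carried by sets on which the map moves points by less than $1/k$. A non-autonomous system with positive metric mean dimension need not look like this. Take $\textit{\textbf{f}}=(\psi^{2^{i}})_{i\in\mathbb{N}}$ with $\psi(x)=1-|2x-1|$: each $f_{i}$ is piecewise affine with $2^{2^{i}}$ laps, every lap mapped onto all of $[0,1]$, so all of its oscillation occurs at scale $1$ and your surgery has nothing to act on. Worse, \emph{no} perturbation $\textit{\textbf{g}}$ with $D(\textit{\textbf{g}},\textit{\textbf{f}}\,)\leq\delta<1/4$ can help: each complete lap $L$ of $f_{i}$ still satisfies $g_{i}(L)\supseteq[\delta,1-\delta]$, so $g_{i}$ retains a horseshoe with roughly $2^{2^{i}}$ branches over the fixed interval $[\delta,1-\delta]$, the compositions $g_{1}^{(m)}$ have super-exponentially many branches onto that interval, and $\text{sep}(\textit{\textbf{g}},\varepsilon)=\infty$ for all small $\varepsilon$, whence $\text{mdim}_{M}(X,\textit{\textbf{g}},|\cdot|)=\infty$. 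The finite-entropy approximants you are after simply do not exist near this $\textit{\textbf{f}}$; the gap is not a postponed estimate but a construction that fails.

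The paper's own proof is entirely different: it damps the system multiplicatively, setting $\textbf{\textit{g}}_{n}=(\lambda_{n+i}f_{i})_{i\in\mathbb{N}}$ with $\lambda_{m}\to1$ and $\prod_{m}\lambda_{m}=0$, so that each coordinate moves by at most $\sup_{i}(1-\lambda_{n+i})\to0$ while, it is claimed, every orbit of $\textbf{\textit{g}}_{n}$ collapses to $0$, forcing zero entropy. This makes no attempt to simplify individual coordinates and instead destroys complexity through the composition, which is the right general instinct in the non-autonomous category (it is how Kolyada--Snoha defeat lower semi-continuity of $h_{top}$ on $\mathcal{C}([0,1])$). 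You should be aware, however, that the orbit-collapse claim $({g}_{n})^{(k)}(x)\to0$ is only automatic when the factors $\lambda$ accumulate along the orbit, e.g.\ for maps fixing $0$ with $f(\lambda y)\leq\lambda f(y)$; for $f_{i}=\psi^{2^{i}}$ one gets instead $g_{1}^{(k)}([0,1])=[0,\lambda_{n+k}]$, which does not shrink to $\{0\}$, and the horseshoe count above shows the damped system still has infinite metric mean dimension. So the same example that defeats your surgery also shows that the theorem, asserted for \emph{every} system of positive metric mean dimension, needs additional hypotheses or a genuinely different argument; neither your proposal nor the paper's proof closes this point.
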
  
\begin{proof}
Let $\textit{\textbf{f}}=(f_{i})_{i\in\mathbb{N}}$ be a non-autonomous dynamical system with positive metric mean dimension. Let $\lambda_{n}$ be a sequence in $[0,1]$ such that $\lambda_{n}\rightarrow 1$ and $\lambda_{n}\cdots \lambda_{1}\rightarrow 0$ as $n\rightarrow \infty$. Take $\textbf{\textit{g}}_{n}=(\lambda_{n+i}f_{i})_{i\in\mathbb{N}}$. Thus $\textbf{\textit{g}}_{n}\rightarrow \textbf{\textit{f}}$ as $n\rightarrow \infty$. However, for any $x\in X$, $({g}_{n})^{(k)}(x)\rightarrow 0$ as $k\rightarrow \infty$. Consequently, the metric mean dimension of $\textbf{\textit{g}}_{n}$ is zero for each $n\in \mathbb{N}$. 
\end{proof}

\end{document}